\newcommand{\dgeq}{\succeq}
\newcommand{\fm}{{\mathfrak m}}
\newcommand{\gr}{{\mathfrak{gr}}}
\newcommand{\p}{{\partial}}
\DeclareMathOperator{\ord}{ord}
\DeclareMathOperator{\Dual}{Dual}
\DeclareMathOperator{\POStoN}{(\bZ_{\geq 0})^N}
\DeclareMathOperator{\HP}{HP} 
\DeclareMathOperator{\initial}{in}
\newtheorem{theorem}{Theorem}[section]
\newtheorem{lemma}[theorem]{Lemma}
\newtheorem{corollary}[theorem]{Corollary}
\newtheorem{definition}[theorem]{Definition}
\newtheorem{example}[theorem]{Example}
\newtheorem{proposition}[theorem]{Proposition}
\newtheorem{algorithm}[theorem]{Algorithm}
\newtheorem{remark}[theorem]{Remark}
\newcommand{\bN}{{\mathbb N}}
\newcommand{\bZ}{{\mathbb Z}}
\newcommand{\bC}{{\mathbb C}}
\newcommand{\bV}{{\mathbb V}}
\newcommand{\Span}{\operatorname{span}}
\newcommand\Var{{\bV}}
\def\gcorner{g-corner}
\DeclareMathOperator{\der}{\sigma}
\DeclareMathOperator{\lcm}{lcm}
\newcommand{\ideal}[1]{\langle #1 \rangle}
\newcommand{\LT}{\operatorname{in}_\geq}
\newcommand{\LTg}{\operatorname{in}_\succeq}
  \def \tab#1{\kern #1 truein}
\begin{document}
\title{Eliminating dual spaces}
\author{
Robert Krone\thanks{School of Mathematics, Georgia Tech, Atlanta GA, USA ({\tt rkrone3@math.gatech.edu}). Partially supported by NSF grant DMS-1151297}
\and
Anton Leykin\thanks{School of Mathematics, Georgia Tech, Atlanta GA, USA ({\tt leykin@math.gatech.edu}). Partially supported by NSF grants DMS-0914802 and DMS-1151297}}
\maketitle

\begin{abstract}
Macaulay dual spaces provide a local description of an affine scheme and give rise to computational machinery that is compatible with the methods of numerical algebraic geometry. We introduce eliminating dual spaces, use them for computing dual spaces of quotient ideals, and develop an algorithm for detection of embedded points on an algebraic curve.  
\end{abstract}


\section{Introduction}

We explore algorithms that use dual spaces to compute local information about polynomial ideals over the complex numbers, as an alternative to working directly with polynomials.  This strategy often has advantages in the regime of approximate numerical computations and for hybrid symbolic-numerical algorithms.  Many computations are reduced to linear algebra, allowing numerical linear algebra techniques to be applied.

Given generators of a polynomial ideal $I$ and a point $p$ in its vanishing set, the dual space of $I$ at $p$ is the vector space dual of the extension of $I$ in the local ring at $p$, and it uniquely encodes the local properties of $I$ there.  Certain combinatorial information about the dual space, such as dimension, can be accurately computed even when $p$ is only known approximately but with high enough precision. Existing methods of numerical algebraic geometry (see e.g., \cite{SVW9} and~\cite{Sommese-Wampler-book-05}) offer efficient algorithms to approximately compute points on the vanishing set of an ideal.  

The contributions of this paper are:
\begin{itemize}
 \item Establishing a correspondence between a local monomial order on a local ring (primal order) and the corresponding dual order on the monomials of the dual space.
 \item Characterizing the dual space of colon (quotient) ideals.
 \item Introducing the notion of {\em eliminating dual spaces} as a computational tool.
 \item Applying these ideas to construct an algorithm for detecting embedded primary components of a curve.
\end{itemize}
Additionally, we consolidate the necessary background on dual spaces, state their properties in a modern language, and provide proofs where references are unavailable.

The idea of studying systems of polynomials through dual spaces dates back to Macaulay~\cite{M16}.  
Most of the recent work using Macaulay's machinery concerns zero-dimensional ideals or, geometrically speaking, isolated points. This includes algorithms for computing a basis of the dual space~\cite{Mourrain:inverse-systems,DZ05} and the local Hilbert function at an isolated point~\cite{griffin2011numerical}, as well as various deflation procedures~\cite{Lec-deflation-02,LVZ,Hauenstein-Wampler:isosingular}. 
Several studies depart from the zero-dimensional setting: the local dimension test~\cite{Bates-et-al:local-dimension-test}, computations using dual spaces for homogeneous ideals~\cite{Hauenstein:preprint2011}, and an numerical algorithm for local Hilbert polynomial in the general case~\cite{krone2014numerical}.

As an application of eliminating dual spaces we present an algorithm for detecting embedded components of a curve. This fills in a key missing piece in the program to numerically compute a primary decomposition of a polynomial ideal as laid out in \cite{Leykin:NPD} in the case of one-dimensional ideals.  We remark that eliminating dual spaces may, in theory, be used to provide an embedded component test in dimension more than one. However, we did not pursue this direction since for practical computation this potential technique appears to be inferior to an alternative treatment we develop in~\cite{krone2014numerical}. Our algorithms for dual spaces and detecting embedded components are implemented in the {\em Macaulay2} computer algebra system~\cite{M2www} and the code is posted at~\cite{ECTwww}.

The rest of the paper is structured as follows.  Section~\ref{Sec:dualSpace} gives an expository background on dual spaces of ideals in a local ring, laying out the facts that will be used later in the paper and giving proofs where they were not readily available in the literature.  Additionally we characterize the dual space of quotient ideals.  Section~\ref{Sec:Hilbert} describes the correspondence between the initial terms of an ideal with respect to a (primal) local monomial order and the initial terms of the dual space in the dual order.  Such a relation has been observed before in special settings such as zero-dimensional ideals with a graded order, but we give a fully general result.  This section also describes the relation between truncated dual spaces and the Hilbert function.  Section~\ref{Sec:eliminating} introduces the eliminating dual space, which generalize the truncated dual space, and can be used to compute truncated dual spaces of a quotient ideal.  These ideas are applied in~\ref{Sec:embedded-test} to the problem of detecting the embedded components on a curve, and examples are provided.

\section{Macaulay dual spaces}\label{Sec:dualSpace}

For $\alpha \in \POStoN$ and $y\in\bC^N$, define
\begin{itemize}
\item $x^\alpha = x_1^{\alpha_1}\cdots x_N^{\alpha_N}$,
\item $|\alpha| = \sum_{i=1}^N \alpha_i$,
\item $\alpha! = \alpha_1!\alpha_2!\dots\alpha_N!$,
\item $\p^\alpha = \frac{1}{\alpha!}\frac{\p^{|\alpha|}}{\p x^\alpha}$, and
\item $\p^\alpha[y]: R \rightarrow \bC$~~defined by~
    $\p^\alpha[y](g) = (\p^\alpha g)(y)$.
\end{itemize}
The differential functional $\partial^\alpha[y]$ sometimes would be written $\partial^{x^\alpha}[y]$ (e.g. $\partial^1 - \partial^{y} + \partial^{x^2yz}$) and
when the point $y$ is implied $\partial^\alpha[y]$ would be written as~$\partial^\alpha$.
For $y \in \bC^N$, let $D_y = \Span_{\bC}\left\{\partial^\alpha[y]~|~\alpha \in \POStoN \right\}$
be the vector space of differential functionals at $y$. This linear space is graded by {\em order}, for a finite sum $q = \sum c_\alpha\p^\alpha$,
\[
\ord q = \max_{c_\alpha\neq 0} |\alpha|.
\]
The {\em homogeneous} part of order $i$ of $q\in D_y$ is referred to as $q_i$. This grading is the associated graded linear space of the filtration $D_y^*$:
\[D_y^0 \subset D_y^1 \subset D_y^2 \subset \ldots \text{, where }D_y^i = \{q\in D_y~|~\ord q\leq i\}\}.\]

\begin{definition}
The {\it Macaulay dual space}, or simply {\em dual space},
of differential functionals that vanish
at $y$ for an ideal $I\subset\bC[x]=\bC[x_1,\dots,x_N]$~is
\begin{equation}\label{Eq:Dual}
D_y[I] = \{q\in D_y~|~q(g)=0\hbox{~for all~}g\in I\}.
\end{equation}
The dual space $D_y[I]$ is a linear subspace of $D_y$, a basis of $D_y[I]$ is called a {\it dual basis} for $I$.
\end{definition}

The following theorem of Macaulay describes the dimension of the dual space at an isolated solution $y$. The following statement appears in the classical text of Macaulay~\cite{M16}.
\begin{theorem}\label{Thm:dimDual}
A solution $y\in\Var(I)$ is isolated with multiplicity $m$ if and only if $\dim_\bC D_y[I] = m$.
\end{theorem}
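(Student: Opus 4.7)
Translate so that $y=0$, write $\fm=(x_1,\ldots,x_N)$, and let $\hat I = I\cdot\bC[[x]]$. The multiplicity of $0$ as an isolated solution is, by definition, $\dim_\bC\bC[x]_\fm/I\bC[x]_\fm$, which by faithful flatness of the $\fm$-adic completion equals $\dim_\bC\bC[[x]]/\hat I$; moreover this number is finite precisely when $0$ is isolated on $\Var(I)$. Thus it suffices to prove the equivalent identity
\[
    \dim_\bC D_0[I] \;=\; \dim_\bC \bC[[x]]/\hat I \;\in\; \bN\cup\{\infty\}.
\]

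The monomial pairing $\langle\p^\alpha, x^\beta\rangle=\delta_{\alpha,\beta}$ extends naturally to a pairing $D_0\times\bC[[x]]\to\bC$, because any $q\in D_0$ has some finite order $k$ and so $q(f)$ depends only on $f \bmod \fm^{k+1}$. The first technical step is to establish $D_0[I]=D_0[\hat I]$: if $q$ has order $\leq k$ and vanishes on $I$, then for any $\sum_i a_i f_i\in\hat I$ with $a_i\in\bC[[x]]$ and $f_i\in I$, replacing each $a_i$ by its polynomial truncation modulo $\fm^{k+1}$ yields a genuine element of $I$ on which $q$ takes the same value; hence $q$ vanishes on $\hat I$.

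Next run the argument level by level. For each $k\geq 0$, the pairing restricts to a perfect pairing between the finite-dimensional spaces $D_0^k$ and $\bC[x]/\fm^{k+1}$, under which $D_0^k[I]=D_0^k\cap D_0[\hat I]$ is the annihilator of the image of $\hat I$. This gives
\[
    \dim_\bC D_0^k[I] \;=\; \dim_\bC \bC[[x]]/(\hat I+\fm^{k+1}),
\]
and since $D_0[I]=\bigcup_k D_0^k[I]$, passing to the limit yields $\dim_\bC D_0[I]=\lim_{k\to\infty}\dim_\bC \bC[[x]]/(\hat I+\fm^{k+1})$. If $0$ is isolated with multiplicity $m$, then $\bC[[x]]/\hat I$ is a local Artinian ring of $\bC$-length $m$, so by Nakayama $\fm^{k+1}\subseteq\hat I$ for $k\gg 0$, and the limit stabilizes at $m$. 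If $0$ is not isolated, the Hilbert--Samuel function $k\mapsto\dim_\bC\bC[[x]]/(\hat I+\fm^{k+1})$ grows like a polynomial of positive degree, so $\dim D_0[I]=\infty$. I expect the main care to be needed in the first step, the identity $D_0[I]=D_0[\hat I]$: morally, finite-order functionals are automatically $\fm$-adically continuous, and this continuity is what allows the perfect pairings at each truncation level to assemble into the global statement.
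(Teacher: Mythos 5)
The paper does not actually prove Theorem~\ref{Thm:dimDual}; it cites Macaulay's classical text and relies on the statement as known. Your argument is a correct, self-contained proof, and it is the standard modern one: it is entirely consistent with the machinery the paper develops afterwards. Your level-$k$ perfect pairing between $D_0^k$ and $\bC[x]/\fm^{k+1}$, with $D_0^k[I]$ the annihilator of the image of $I$, is exactly the identity $(I^k)^\perp = D_0^k[I] = D_0[I+\fm^{k+1}]$ used in the paper's proof of Proposition~\ref{Prop:IdealFromDual} (and restated as Corollary~\ref{Coro:dim-local-dual}, which the paper instead derives via the standard-monomial Theorem~\ref{Thm:LVZ}); your identification $D_0[I]=D_0[\hat I]$ by truncating power-series coefficients is the same continuity argument as in the paper's Proposition~2.3 and Lemma~\ref{Lem:ExtensionAsIntersection}. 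All the individual steps check out: the truncation argument is valid because an order-$\le k$ functional kills $\fm^{k+1}\bC[[x]]$; the Artinian case terminates by nilpotence of $\fm$ in $\bC[[x]]/\hat I$; and in the non-isolated case one can either invoke the positive-degree Hilbert--Samuel polynomial as you do, or note more elementarily that if the truncated dimensions ever stabilized then Nakayama would force $\fm^{k+1}\subseteq\hat I$, contradicting non-isolatedness. The one point worth stating explicitly rather than in passing is that your definition of multiplicity, $\dim_\bC\bC[x]_\fm/I\bC[x]_\fm$, agrees with whatever notion of multiplicity the reader has in mind (e.g.\ local intersection multiplicity for a square system); with that convention fixed, the proof is complete.
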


\begin{definition} A subspace $S \subset D_y$ is {\em homogeneous} if it is spanned by homogeneous functionals $q\in D_y^{\ord q}\setminus D_y^{\ord q-1}$. If, in addition, $S$ is spanned by $\p^\alpha[y],\ \alpha\in A$ for some subset $A \subset (\bZ_{\geq 0})^N$, then $S$ is called {\em monomial}.
\end{definition}

\subsection{Local ring vs. Dual space}\label{Sec:Local-vs-Dual}

For the purpose of this section, without a loss of generality, we may assume $y=0\in\bC^N$.
Consider the local ring $R_0 = R_\fm$ where $\fm = (x_1,\ldots,x_N)$. Let the space of dual functionals be defined as above replacing $R$ (polynomial) with $R_0$ (rational functions with denominators not vanishing at 0).

\begin{remark}\label{Rem:OneToOne} Ideals in $R$ with no primary components away
from the origin are in one-to-one correspondence with ideals in the local ring $R_0$:
\begin{itemize}
\item an ideal $I\subset R$ defines the extension $IR_0\subset R_0$;
\item an ideal $I\subset R_0$ corresponds to the ideal $I\cap R \subset R$
with no primary components away from the origin.
\end{itemize}
\end{remark}

\begin{proposition}
For ideal $I \subset R$, the dual space $D_0[I]$ is identical to the dual space of its extension in $R_0$, $D_0[IR_0]$. 
\end{proposition}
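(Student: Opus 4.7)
The plan is to prove the two inclusions. The inclusion $D_0[IR_0] \subseteq D_0[I]$ is immediate from $I \subseteq IR_0$, since any functional vanishing on the larger set automatically vanishes on the smaller one.

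For the reverse inclusion, I would first clarify how a functional $q \in D_0 \subset D_0(R)^*$ extends to act on $R_0$. Any $u = f/g \in R_0$ (with $f \in R$, $g \in R$, $g(0) \neq 0$) has a well-defined Taylor expansion at $0$, and $\partial^\alpha[0](f/g)$ can be computed via the quotient rule, so $q$ is naturally a functional on $R_0$. The key observation is that $\partial^\alpha[0](x^\beta) = \delta_{\alpha\beta}$, so every $q$ of order $\leq k$ annihilates $\fm^{k+1}$, and by the Taylor-expansion viewpoint it annihilates $\fm^{k+1} R_0$ as well.

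Now take $q \in D_0[I]$ of order $k$ and an arbitrary $h = f/g \in IR_0$ with $f \in I$ and $g(0) \neq 0$. The main step is to approximate $1/g$ by a polynomial modulo $\fm^{k+1}$: writing $g = g(0)(1 + g_1/g(0))$ with $g_1 \in \fm$, the geometric series truncation
\[
g' = \frac{1}{g(0)} \sum_{j=0}^{k} \left(-\frac{g_1}{g(0)}\right)^j \in R
\]
satisfies $gg' \equiv 1 \pmod{\fm^{k+1}}$, hence $f/g - fg' \in \fm^{k+1} R_0$. Since $fg' \in I$ (as $f \in I$) and $q$ vanishes on $I$, while $q$ also vanishes on $\fm^{k+1} R_0$ by the order bound, we conclude $q(h) = q(fg') + q(f/g - fg') = 0$. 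Therefore $q \in D_0[IR_0]$.

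The main conceptual point — and the only real subtlety — is making sure that $q$ acts coherently on $R_0$ and that it kills $\fm^{k+1} R_0$; once that is in place, the polynomial approximation of $1/g$ modulo a power of $\fm$ reduces the problem to the polynomial ring where the hypothesis $q \in D_0[I]$ applies directly. Everything else is formal.
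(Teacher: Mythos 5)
Your proof is correct and follows essentially the same route as the paper: both reduce to the polynomial case by expanding the rational multiplier as a power series at $0$ and exploiting the finite order of $q$. The only difference is packaging — the paper writes $q(gf)=\sum_\alpha c_\alpha q(x^\alpha f)$ and pushes $q$ through the infinite sum term by term, while you truncate $1/g$ via a geometric series modulo $\fm^{k+1}$ and kill the remainder using the order bound, which makes that interchange explicit.
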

\begin{proof}
Any rational function $g \in R_0$ can be expressed as a power series $g = \sum_\alpha c_\alpha x^\alpha$.  If $q \in D_0[I]$, then for any $f \in I$, $q(x^\alpha f) = 0$ for all monomials $x^\alpha$.  Then
\[ q(gf) = \sum_{\alpha \in (\bZ_{\geq 0})^N} c_\alpha q(x^\alpha f) = 0 \]
so $q \in D_0[IR_0]$.  For $q$ not in $D_0[I]$ there is some $f\in I$ with $q(f) \neq 0$, and $f$ is also in $IR_0$.
\end{proof}
As a result we will speak interchangeably about the dual space of an ideal $I$ at the point $0$ and the dual space of its extension in the localization of $R$ at $0$, $IR_0$.

The following lemma provides another characterization of the extension of an ideal $I$ in the local ring, which will help describe the close connection between $IR_0$ and the Macaulay dual space.

\begin{lemma}\label{Lem:ExtensionAsIntersection}
 For any ideal $I \subset R$,
 \[ IR_0 \cap R = \bigcap_{k=1}^\infty (I + \fm^k). \]
\end{lemma}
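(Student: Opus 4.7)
The plan is to prove the two inclusions separately, with the forward inclusion by a direct power-series approximation and the reverse inclusion by Krull's intersection theorem applied in the local ring $R_0/IR_0$.

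For $IR_0 \cap R \subseteq \bigcap_k (I + \fm^k)$, I would take $f \in IR_0 \cap R$ and write $f = g/s$ with $g \in I$ and $s(0) \neq 0$. Normalizing so that $s(0) = 1$, write $s = 1 + s'$ with $s' \in \fm$. For each $k$, set
\[
t_k := 1 - s' + (s')^2 - \cdots + (-1)^{k-1}(s')^{k-1} \in R,
\]
so that $s\, t_k = 1 - (-s')^k \equiv 1 \pmod{\fm^k}$. Then
\[
f = f \cdot s t_k + f\bigl(1 - s t_k\bigr) = t_k g + f \cdot (-s')^k \cdot (\pm 1),
\]
which exhibits $f$ as an element of $I + \fm^k$ (since $t_k g \in I$ and $f (s')^k \in \fm^k$). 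As this holds for every $k$, $f$ lies in the intersection.

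For the reverse inclusion, let $f \in \bigcap_k (I + \fm^k)$; note that the intersection is automatically contained in $R$. Passing to $R_0$, the hypothesis gives $f \in IR_0 + \fm^k R_0$ for all $k$, so the image $\bar f$ of $f$ in $R_0/IR_0$ lies in $\bigcap_k \fm^k \cdot (R_0/IR_0)$. Since $R_0/IR_0$ is a Noetherian local ring with maximal ideal $\fm(R_0/IR_0)$, Krull's intersection theorem forces $\bar f = 0$, that is, $f \in IR_0$. Combined with $f \in R$, this gives $f \in IR_0 \cap R$.

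The only substantive input is Krull's intersection theorem; the forward direction is just a finite truncation of the geometric series expansion of $1/s$ in $R_0$. The main thing to be careful about is keeping track of what ring each equality lives in (the forward direction produces honest polynomial identities in $R$, while the reverse direction is most naturally phrased after extension to $R_0$).
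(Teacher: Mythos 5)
Your proof is correct, but it takes a genuinely different route from the paper. The paper passes through the $\fm$-adic completion $\hat R=\bC[[x_1,\dots,x_N]]$: it identifies $\bigcap_k(I+\fm^k)$ with $I\hat R\cap R$ via the exactness of completion and the fact that the kernel of $M\to\hat M$ is $\bigcap_k\fm^kM$, and then argues that $I\hat R\cap R=IR_0\cap R$. You avoid the completion entirely. Your forward inclusion is an explicit, constructive computation: clearing denominators to write $f=g/s$ with $g\in I$ and $s(0)=1$ (a standard fact about extended ideals in a localization, worth a half-sentence of justification), and then truncating the geometric series for $1/s$ to exhibit $f\in I+\fm^k$ for each $k$; this part uses no machinery at all. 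Your reverse inclusion applies Krull's intersection theorem directly in the Noetherian local ring $R_0/IR_0$, which is clean and correct since $\fm^k R_0=(\fm R_0)^k$. What your approach buys is self-containedness and, in particular, it sidesteps the most delicate step of the paper's argument, namely showing that an element of $I\hat R\cap R$ actually lies in $IR_0$ (the paper's justification there is rather terse, treating a general element of $I\hat R$ as a single product $hg$). What the paper's approach buys is brevity for a reader who already has the completion formalism of Atiyah--Macdonald Chapter 10 at hand; in the end both arguments rest on the same deep input (Artin--Rees/Krull).
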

\begin{proof}
 Let $\hat{R}$ denote the completion of $R$ with respect to the maximal ideal $\fm$ (the formal power series ring $\hat{R} = \bC [[x_1,\ldots,x_N]]$).  The kernel of the map of $R$-modules $R/I \to \widehat{R/I}$ is $\bigcap_k \fm^k(R/I)$, and by the exactness of completion $\widehat{R/I} \cong \hat{R}/I\hat{R}$ (see \cite{Atiyah-Macdonald} Chapter 10).  Composing the quotient map $R \to R/I$ with the above, we see that $I\hat{R} \cap R$, which is the kernel of natural map $R \to \hat{R}/I\hat{R}$, is $\bigcap_k (I + \fm^k)$.
 
 For any $f \in I\hat{R} \cap R$, there is $h \in I$, $g \in \hat{R}$ such that $f = hg$, so $g = h/f$ is a rational function in $R_0$.  Therefore $f \in IR_0 \cap R$, and so $IR_0 \cap R = I\hat{R} \cap R$.
\end{proof}

\begin{proposition}\label{Prop:IdealFromDual}
For ideal $I \subset R$, $f \in IR_0 \cap R$ if and only if $q(f) = 0$ for all $q \in D_0[I]$.
\end{proposition}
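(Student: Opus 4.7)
The plan is to convert the statement into an intersection statement using Lemma~\ref{Lem:ExtensionAsIntersection}, which rewrites $IR_0 \cap R$ as $\bigcap_{k\geq 1}(I+\fm^k)$. So it suffices to show that $f \in \bigcap_k (I+\fm^k)$ if and only if $q(f)=0$ for every $q \in D_0[I]$.

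For the forward direction, I would argue truncation by truncation. Given $q \in D_0[I]$ of order $d$, write $f = g + h$ with $g \in I$ and $h \in \fm^{d+1}$. Then $q(g)=0$ because $q \in D_0[I]$, and $q(h)=0$ because a differential of order $\leq d$ applied to a polynomial in $\fm^{d+1}$ and evaluated at $0$ vanishes (every monomial in $h$ has total degree $>d$, so any order-$d$ partial derivative at $0$ is zero). This gives $q(f)=0$.

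For the converse, the key observation is that the pairing $\langle q, f \rangle := q(f)$ restricts to a perfect pairing between the finite-dimensional vector spaces $D_0^d$ and $R/\fm^{d+1}$, because the monomial bases $\{\p^\alpha : |\alpha|\leq d\}$ and $\{x^\alpha + \fm^{d+1} : |\alpha|\leq d\}$ are dual to one another (that is, $\p^\alpha(x^\beta)=\delta_{\alpha\beta}$). Under this duality, the annihilator of the subspace $(I+\fm^{d+1})/\fm^{d+1} \subseteq R/\fm^{d+1}$ is exactly $D_0[I] \cap D_0^d$: a functional of order $\leq d$ vanishes on $\fm^{d+1}$ automatically and on $I$ iff it lies in $D_0[I]$. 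Dually, an element of $R/\fm^{d+1}$ is zero iff it is killed by every functional in its annihilator, so $f \in I+\fm^{d+1}$ iff $q(f)=0$ for all $q \in D_0[I] \cap D_0^d$. Assuming $q(f)=0$ for every $q \in D_0[I]$, in particular this holds for $q$ of order $\leq d$, so $f \in I + \fm^{d+1}$ for every $d$, and we conclude using the lemma.

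The main obstacle is simply to articulate cleanly the perfect pairing step and the identification of annihilators; once that linear-algebra duality is set up, both implications follow from short computations. No deep ingredients beyond Lemma~\ref{Lem:ExtensionAsIntersection} and the definition of $D_0[I]$ are required.
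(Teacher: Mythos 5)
Your proof is correct and follows essentially the same route as the paper's: both reduce the converse to finite-dimensional duality between the order-$\leq d$ functionals and $R$ modulo $\fm^{d+1}$, identify $D_0^d[I]$ with the annihilator of the truncated ideal (so that $f\in I+\fm^{d+1}$ for every $d$), and then invoke Lemma~\ref{Lem:ExtensionAsIntersection}. If anything, your forward direction is the more complete of the two, since the paper only records that $q(f)=0$ for $f\in I$ and leaves the extension to all of $IR_0\cap R$ implicit, whereas you handle it directly via the decomposition $f=g+h$ with $g\in I$ and $h\in\fm^{d+1}$.
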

\begin{proof}
It follows from the definition that $f \in I$ implies $q(f) = 0$ for all $q \in D_0[I]$.

 Let $R^k$ be the space of polynomials with degree $\leq k$, let $f^k$ denote the truncation of $f$ to degree $k$ and let $I^k \subset R^k$ be the set $\{ f^k : f \in I\}$.  Since $R^k$ is a finite dimensional vector space,
 \[ (I^k)^\perp = D_0^k[I] = D_0[I + \fm^{k+1}]. \]
 Suppose for some polynomial $f$ that $q(f) = 0$ for all $q \in D_0[I] = \bigcup_k D_0^k[I]$.  Because $(I^k)^{\perp\perp} = I^k$, we have $f^k \in I^k$ which implies $f \in I + \fm^{k+1}$.  By Lemma~\ref{Lem:ExtensionAsIntersection}, $f \in IR_0 \cap R$.
\end{proof}

\begin{corollary}\label{Coro:DualIsInjective}
For ideals $J_1,J_2 \subset R_0$, $J_1 \subset J_2$ if and only if $D_0[J_1] \supset D_0[J_2]$.
\end{corollary}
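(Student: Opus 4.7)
The plan is to deduce both implications from Proposition~\ref{Prop:IdealFromDual}, passing through the correspondence of Remark~\ref{Rem:OneToOne} to reduce statements about ideals in the local ring $R_0$ to statements about polynomial ideals in $R$.

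The forward implication is essentially formal. If $J_1 \subset J_2$, then any functional $q$ that annihilates every element of $J_2$ automatically annihilates every element of $J_1$; this is immediate from the definition \eqref{Eq:Dual}, so $D_0[J_2] \subset D_0[J_1]$. No appeal to the local ring theory is needed here.

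For the reverse implication, suppose $D_0[J_1] \supset D_0[J_2]$. Write $I_i = J_i \cap R$, so that $J_i = I_i R_0$ by Remark~\ref{Rem:OneToOne}, and $D_0[J_i] = D_0[I_i]$ by the preceding proposition. Given $f \in J_1$, write $f = g/h$ with $g \in R$ and $h \in R \setminus \fm$; then $g = hf \in J_1 \cap R = I_1$. By Proposition~\ref{Prop:IdealFromDual}, $q(g) = 0$ for every $q \in D_0[I_1]$, and since $D_0[I_2] \subset D_0[I_1]$ by hypothesis, $q(g) = 0$ for every $q \in D_0[I_2]$. Applying Proposition~\ref{Prop:IdealFromDual} again yields $g \in I_2 R_0 = J_2$, and hence $f = g/h \in J_2$ because $J_2$ is an ideal in $R_0$ and $h$ is a unit there.

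There is no real obstacle here; the only point that requires a little care is keeping track of where each object lives (polynomial ring vs.\ local ring), since Proposition~\ref{Prop:IdealFromDual} is stated for polynomials $f \in R$ and extended ideals $IR_0 \cap R$, whereas the corollary is phrased for ideals of $R_0$. The device of clearing denominators $f = g/h$ and invoking the bijection of Remark~\ref{Rem:OneToOne} handles this bookkeeping cleanly.
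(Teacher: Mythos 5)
Your proof is correct and uses the same key ingredient as the paper, namely Proposition~\ref{Prop:IdealFromDual}; the paper merely phrases the nontrivial direction contrapositively (an $f \in J_1 \setminus J_2$ yields a $q \in D_0[J_2]$ with $q(f) \neq 0$), while you argue directly and spell out the denominator-clearing step between $R$ and $R_0$ that the paper leaves implicit.
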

\begin{proof}
It's clear that $J_1 \subset J_2$ implies $D_0[J_1] \supset D_0[J_2]$.  Suppose $J_1 \not\subset J_2$, so there is polynomial $f \in J_1$ with $f \notin J_2$.  By Proposition~\ref{Prop:IdealFromDual} there is $q \in D_0[J_2]$ with $q(f) \neq 0$, so $D_0[J_2] \not\subset D_0[J_1]$.
\end{proof}
An immediate consequence of this corollary is that an ideal $J \subset R_0$ is uniquely determined by its dual space $D_0[J]$.

\begin{corollary}\label{Coro:HomoMono}
The dual space $D_0[J]$ is homogeneous (respectively, monomial) iff the ideal $J\subset R_0$ is homogeneous (respectively, monomial), i.e., generated by homogeneous elements with respect to filtration
$\{\fm^k\}_{k\geq 0}$ (respectively, by monomials).
\end{corollary}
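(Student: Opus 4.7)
The plan is to transfer both equivalences to the polynomial ideal $I = J\cap R$; by Remark~\ref{Rem:OneToOne} together with the preceding proposition identifying $D_0[I]=D_0[IR_0]$, $I$ shares its dual space with $J$ and inherits the same notion of being homogeneous or monomial. The computational engine throughout is the pairing $\partial^\alpha[0](x^\beta)=\delta_{\alpha,\beta}$, which exhibits $\{\partial^\alpha\}$ and $\{x^\alpha\}$ as dual bases. In particular, decomposing $q \in D_0$ as $q = \sum_j q_j$ by order and a polynomial as $f = \sum_k f_k$ by degree, one has $q_j(f_k)=0$ whenever $j\neq k$, so $q(f_k) = q_k(f_k) = q_k(f)$.

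For the homogeneous equivalence, both directions follow from this orthogonality. If $I$ is homogeneous and $q\in D_0[I]$, then each $f_j$ of any $f\in I$ lies in $I$, so $q_j(f) = q_j(f_j) = q(f_j) = 0$; hence every $q_j \in D_0[I]$. Conversely, if $D_0[I]$ is homogeneous, then for $f\in I$ and any $q\in D_0[I]$ we have $q(f_k) = q_k(f) = 0$ because $q_k\in D_0[I]$, and Proposition~\ref{Prop:IdealFromDual} then places each $f_k$ in $I$.

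For the monomial equivalence, the forward direction is immediate: every element of a monomial ideal $I$ is a linear combination of monomials lying in $I$, so $\partial^\alpha$ annihilates $I$ precisely when $x^\alpha\notin I$, giving $D_0[I] = \Span\{\partial^\alpha \mid x^\alpha\notin I\}$, which is manifestly monomial. For the converse I would invoke one additional structural property of dual spaces: $D_0[I]$ is closed under the shift operators $\sigma_i$ defined by $(\sigma_i q)(f) = q(x_i f)$, which follows from $x_i\cdot I\subset I$ and which acts on monomial functionals by $\sigma_i\partial^\alpha = \partial^{\alpha-e_i}$ (with the convention $\partial^\beta = 0$ for $\beta\notin\POStoN$). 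Hence if $D_0[I] = \Span\{\partial^\alpha\mid\alpha\in A\}$, the set $A$ must be downward-closed, so its complement $B$ is the exponent set of a monomial ideal $I'$; the already-established forward direction gives $D_0[I'] = D_0[I]$, and Corollary~\ref{Coro:DualIsInjective} forces $I = I'$.

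The main obstacle is the monomial converse, which depends on the $\sigma_i$-stability of $D_0[I]$. This fact is elementary, but it has not been explicitly isolated earlier in the excerpt, so I would record it in a one-line lemma (or simply a sentence) immediately before this proof and reuse it throughout.
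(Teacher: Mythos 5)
Your proof is correct, and it differs from the paper's in instructive ways. For the direction that carries the real content in the monomial case --- dual space monomial implies ideal monomial --- you and the paper do essentially the same thing: build the monomial ideal whose exponent set is the complement of the (downward-closed) support set, note its dual space coincides with $D_0[J]$, and invoke injectivity of $\Dual$ (Corollary~\ref{Coro:DualIsInjective}). Your observation that this step needs closure of $D_0[J]$ under the operators $\der_{x_i}$ is exactly right, and in the paper that fact is only recorded \emph{after} this corollary, as Proposition~\ref{Prop:closed}; isolating it beforehand, as you propose, is a genuine improvement in the exposition. Where you diverge is the homogeneous case: the paper reuses the construct-and-compare template, defining $I$ degree by degree as the orthogonal complement of $L^k/L^{k-1}$ and asserting that $D_0[I]=L$ is ``straightforward,'' whereas you argue directly on $I=J\cap R$ via the pairing $\p^\alpha(x^\beta)=\delta_{\alpha\beta}$, showing each graded piece $f_k$ of $f\in I$ is annihilated by all of $D_0[I]$ and then placing it in the ideal by Proposition~\ref{Prop:IdealFromDual}. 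Your route sidesteps the unverified claim $D_0[I]=L$ and, unlike the paper's proof, also supplies the forward implications (ideal homogeneous or monomial implies dual space homogeneous or monomial), which the paper leaves entirely implicit. Two small points worth a sentence each in a final write-up: Proposition~\ref{Prop:IdealFromDual} places $f_k$ in $IR_0\cap R$ rather than in $I$, which is harmless only because you chose $I=J\cap R$, so that $IR_0\cap R=I$ by Remark~\ref{Rem:OneToOne}; and the transfer of ``homogeneous/monomial'' between $J$ and $I=J\cap R$ uses that a homogeneous (or monomial) ideal of $R$ has all associated primes containing the origin, so that contracting its extension returns the same ideal.
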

\begin{proof}
Given a homogeneous (respectively, monomial) dual space $L=D_0[J]$ of an ideal $J \subset R_0$ it is straightforward to write down homogeneous (respectively, monomial) $I\subset R$ such that $D_0[I] = L$. Namely, its homogeneous part of order $k$ is the set of polynomials orthogonal to $L^k/L^{k-1}$; for the monomial case, it is particularly explicit: a monomial $x^\alpha$ belongs to $I$ iff $\p^\alpha\notin L$. The extension $IR_0$ is determined by $L$ uniquely according to Proposition~\ref{Coro:DualIsInjective}, hence, $IR_0 = J$.
\end{proof}

\begin{remark}
  One could easily extend the definition of homogeneous and monomial ideals to the local ring $R_y$ for an arbitrary point $y\in\bC^N$: in particular, an ideal is called monomial if it is generated by elements of the form $(x-y)^\alpha$, $\alpha\in \POStoN$.
\end{remark}

Macaulay dual bases allow for testing ideal membership at a solution \cite{MMM96}
as stated in the following proposition. This can be readily generalized for homogeneous ideals using the following corollary, Remark~\ref{Rem:OneToOne}, and Proposition~\ref{Coro:DualIsInjective}.

\begin{corollary}[Lemma~11 of \cite{Hauenstein:preprint2011}]\label{Coro:HomoMembership}
A polynomial $f \in R$ of degree $d$ is a member of a homogeneous ideal $I\subset R$ iff $f$ is annihilated by $D_0^{d}[I]$.
\end{corollary}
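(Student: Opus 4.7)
The forward direction is immediate from the definition of the dual space: if $f \in I$ then every $q \in D_0[I]$, and in particular every $q \in D_0^d[I]$, satisfies $q(f) = 0$.

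For the reverse direction, suppose $q(f) = 0$ for all $q \in D_0^d[I]$. The plan is to first extract the weaker membership $f \in I + \fm^{d+1}$ and then promote it to $f \in I$ using the homogeneity of $I$. The first step recycles the finite-dimensional orthogonality argument inside the proof of Proposition~\ref{Prop:IdealFromDual}: in the notation of that proof, $D_0^d[I] = (I^d)^\perp$ as subspaces of the finite-dimensional pair $(R^d, D_0^d)$, so biduality $(I^d)^{\perp\perp} = I^d$ together with the hypothesis yields $f = f^d \in I^d$, which unpacks to $f \in I + \fm^{d+1}$.

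To promote this to $f \in I$, write $f = g + h$ with $g \in I$ and $h \in \fm^{d+1}$. Because $I$ is homogeneous, each homogeneous component $g_k$ of $g$ lies in $I$, so the splitting $g = g_{\leq d} + g_{>d}$, with $g_{\leq d} := \sum_{k \leq d} g_k$ and $g_{>d} := \sum_{k > d} g_k$, has both pieces in $I$. Then $f - g_{\leq d} = g_{>d} + h$ has all of its terms in degree $> d$, while $f$ and $g_{\leq d}$ both have degree $\leq d$, forcing $f = g_{\leq d} \in I$.

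There is no substantial obstacle here. The key observation is that the truncation argument of Proposition~\ref{Prop:IdealFromDual} already lands $f$ in $I + \fm^{d+1}$ rather than only in $IR_0 \cap R$, which is what the degree-$d$ hypothesis on $f$ buys us; if one tried to mimic this argument for arbitrary $f$, one would need to worry about the discrepancy between $I$ and $IR_0 \cap R$. The single step where homogeneity of $I$ is used in an essential way is the decomposition $g = g_{\leq d} + g_{>d}$ with both summands in $I$.
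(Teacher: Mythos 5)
Your proof is correct and follows essentially the same route as the paper's: both reduce to the finite-dimensional duality identifying annihilation by $D_0^d[I]$ with membership in $I+\fm^{d+1}$, and then use homogeneity of $I$ to upgrade $f\in I+\fm^{d+1}$ to $f\in I$ for $\deg f\le d$. The paper cites Corollary~\ref{Coro:HomoMono} for the first step where you invoke the biduality argument from Proposition~\ref{Prop:IdealFromDual}, and your explicit decomposition $g=g_{\leq d}+g_{>d}$ just spells out the degree comparison the paper leaves implicit.
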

\begin{proof}
It follows from the proof of Corollary~\ref{Coro:HomoMono} that $D_0^d[I]$ determines $J = I R/\fm^{d+1}$.
Now, $f\in I$ iff its image $\bar f \in J$ iff $f$ is annihilated by $D_0^{d}[I]$.
\end{proof}

The statement of Corollary~\ref{Coro:HomoMembership} corrects that of Theorem~$4.6$ of~\cite{Leykin:NPD} where the assumption of homogeneity was missed as shown in \cite{DualExample}. The local membership test without the assumption of homogeneity is a much harder task, addressed in \cite{krone2014numerical}.

\subsection{Action of differentiation on the dual space}
An alternative characterization of the dual space can be given via Proposition~\ref{Prop:closedness}. There is a natural action of $R_0$ on $D_0$ by pre-multiplication.  Specifically for $q \in D_0$ and $g \in R_0$ let $g\cdot q \in D_0$ denote the functional defined by $(g\cdot q)(f) = q(gf)$.  It can be checked that this gives $D_0$ an $R_0$-module structure. The action of each variable $x_i$ can also be considered as {\em differentiating} functionals in $D_0$ by $\p_i$ (up to normalization).  Let $\der_{x_i}: D_0 \to D_0$ denote the map defined by the action of $x_i$.
\begin{eqnarray*}
  \der_{x_i}:  D_0 &\to& D_0\\
        \p^\alpha &\mapsto& \p^{\alpha - e_i}, \ \ \ \ (i=1,\ldots,N),
\end{eqnarray*}
where $\p^\beta$ is taken to be $0$ when any entry of $\beta$ is less than zero.

The following statements (from Proposition~\ref{Prop:closed} to Corollary~\ref{Coro:truncated-for-homogeneous}) appear, perhaps in alternative phrasing, in many works addressing the duality at hand (see, for example,~\cite{Mourrain:inverse-systems}). We collect the essential pieces, stated in our language, and complete with our own short proofs to guide reader's intuition for this paper. 

\begin{proposition}\label{Prop:closed}
For a subspace $L \subset D_0$ the following are equivalent:
\begin{itemize}
 \item $L$ is the dual space of some ideal \mbox{$J_L \subset R_0$}.
 \item $L$ is closed under differentiation by each variable: $x_i\cdot L \subset L$ for all \mbox{$1 \leq i \leq N$}.
 \item $L$ is an $R_0$-submodule of $D_0$.
\end{itemize}
\end{proposition}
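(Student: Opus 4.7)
The plan is to prove the cycle (i)$\Rightarrow$(iii)$\Rightarrow$(ii)$\Rightarrow$(iii)$\Rightarrow$(i), with three of the four implications essentially immediate and the last substantive. For (i)$\Rightarrow$(iii): if $L = D_0[J]$ for $J \subset R_0$, then for any $q \in L$, $g \in R_0$, and $f \in J$ we have $(g \cdot q)(f) = q(gf) = 0$ because $gf \in J$, so $g \cdot q \in L$. For (iii)$\Rightarrow$(ii): each $x_i$ lies in $R_0$. For (ii)$\Rightarrow$(iii): any $g = \sum_\alpha c_\alpha x^\alpha \in R_0$ acts on a $q$ of order $d$ as $g \cdot q = \sum_{|\alpha|\leq d} c_\alpha (x^\alpha \cdot q)$, a finite sum because $x^\alpha \cdot q = 0$ for $|\alpha| > d$; each $x^\alpha \cdot q$ is an iterate of the operators $\der_{x_i}$ applied to $q$ and hence lies in $L$ by repeated application of the hypothesis, so $g \cdot q \in L$.

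For (iii)$\Rightarrow$(i), I would set $J_L := \{g \in R_0 : q(g) = 0 \text{ for all } q \in L\}$ and verify it is an ideal (given $f \in J_L$ and $g \in R_0$, for each $q \in L$ one has $q(gf) = (g\cdot q)(f) = 0$ because $g\cdot q \in L$), then establish $D_0[J_L] = L$. The inclusion $L \subset D_0[J_L]$ is tautological. For the reverse, I would work order-by-order using the perfect pairing between the finite-dimensional spaces $D_0^d$ and $R/\fm^{d+1}$: writing $L^{[d]} := L \cap D_0^d$, the inherited $R_0$-closure of $L^{[d]}$ makes $\bar J_d := (L^{[d]})^\perp \subset R/\fm^{d+1}$ an ideal with $(\bar J_d)^\perp = L^{[d]}$, and the identity $L^{[d+1]} \cap D_0^d = L^{[d]}$ dualizes to surjectivity $\bar J_{d+1} \twoheadrightarrow \bar J_d$ of the natural restriction. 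Inductively lifting any $\bar f \in \bar J_d$ through this tower of surjections produces an element of $J_L$ reducing to $\bar f$ modulo $\fm^{d+1}$; consequently, for any $q \in D_0[J_L]$ of order $d$, the condition $q(g) = 0$ for all $g \in J_L$ forces $q$ to annihilate $\bar J_d$, whence $q \in L^{[d]} \subset L$.

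The principal obstacle is executing this lifting step cleanly: the inductive construction produces a Cauchy sequence of approximate lifts that converges a priori only in the $\fm$-adic completion $\hat R$ rather than in $R_0$ itself. Realizing the limit inside $R_0$ rests on the Noetherianity of the local ring $R_0$—concretely, via Krull's intersection theorem $\bigcap_k (I + \fm^k R_0) = I$ applied to the candidate ideal, or equivalently via faithful flatness of $R_0 \to \hat R$—to ensure that the inverse system of lifts descends to an honest element of $R_0$ rather than merely to an element of $\hat R$.
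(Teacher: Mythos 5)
Your three easy implications are correct and are essentially the paper's own argument: (i)$\Rightarrow$(iii) and (iii)$\Rightarrow$(ii) are immediate, and your observation that $g=\sum_\alpha c_\alpha x^\alpha$ acts on a functional of order $d$ only through the finitely many monomials with $|\alpha|\leq d$ correctly upgrades closure under each $\der_{x_i}$ to an $R_0$-module structure. The substance is all in (iii)$\Rightarrow$(i), and you have located the crux precisely --- proving $D_0[J_L]\subset L$ --- which is more than the paper does: its proof simply asserts ``$D_0[J_L]=L$'' at exactly this point. Your order-by-order setup is also sound: $\bar J_d=(L\cap D_0^d)^\perp$ is an ideal of $R/\fm^{d+1}$, and the restriction maps $\bar J_{d+1}\to\bar J_d$ are surjective.

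However, the repair you propose for the lifting step does not work, and in fact no repair exists, because this implication is false over $R_0$ as stated. Krull's intersection theorem and faithful flatness of $R_0\to\hat R$ give $J\hat R\cap R_0=J$ for an ideal $J$ that \emph{already lives in} $R_0$; they let you descend membership in an extended ideal, not descend an arbitrary ideal of $\hat R$ to $R_0$. Your coherent tower $(\bar J_d)$ determines an ideal of $\hat R=\bC[[x_1,\ldots,x_N]]$, and the extension map $J\mapsto J\hat R$ on ideals is injective but far from surjective, so the limit ideal may meet $R_0$ only in $0$. Concretely, take $N=2$, $h=x_2+1-e^{x_1}\in\hat R$, and $L=\{q\in D_0 : q(hg)=0 \text{ for all } g\in\hat R\}$. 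Then $L$ is closed under differentiation, $1\in L$ (since $h(0)=0$), and $\p^{x_2}\notin L$ (since $\p^{x_2}(h)=1$), so $0\neq L\neq D_0$. But because $e^{x_1}$ is transcendental over $\bC(x_1)$, the only rational function lying in $h\hat R$ is $0$; hence $J_L=0$ and $D_0[J_L]=D_0\supsetneq L$, and no ideal of $R_0$ has $L$ as its dual space. The statement (and your argument) becomes correct if $R_0$ is replaced by $\hat R$, or if $L$ is assumed finite dimensional --- then $L\subset D_0^d$, so $J_L\supset\fm^{d+1}$ and the perfect pairing of $D_0^d$ with $R_0/\fm^{d+1}$ gives $L^{\perp\perp}=L$ outright. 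As written, both your proof and the paper's have the same unfillable gap; yours at least makes the difficulty visible.
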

\begin{proof}
For any $L \subset D_0$ define

\[ J_L = \{ f\in R_0: q(f) = 0 \text{ for all } q \in L\}. \]

If $L$ is closed under differentiation, then $J_L$ is closed under multiplication by each $x_i$, and therefore under multiplication by all monomials in $R_0$.  Express any $g \in R_0$ as $g = \sum_\alpha c_\alpha x^\alpha$.  Then if $f \in J_L$ and $q \in L$, $q(gf) = \sum_\alpha c_\alpha q(x^\alpha f)$ and each term is zero, so $gf \in J_L$.  Therefore $J_L$ is an ideal and $D_0[J_L] = L$.  Conversely if $L = D_0[J_L]$ and $q \in L$ then $(x_i\cdot q)(f) = q(x_i f) = 0$ for all $f \in J_L$ so $x_i\cdot q \in L$.
\end{proof}

Consider the map $$\Dual : \{\text{ideals of } R_0\}  \to \{R_0\text{-submodules of }D_0\}$$ defined by $\Dual(J) = D_0[J]$.  By Corollary~\ref{Coro:DualIsInjective} and Proposition~\ref{Prop:closed}, this map is a bijection.  This provides another way to characterize the dual space.

\begin{proposition}\label{Prop:closedness}
For ideal $J=\langle f_1,\dots,f_n\rangle\subset R_0$, let $L$ be the maximal $R_0$-submodule of $D_0$ that satisfies $q(f_i) = 0$ for all $q \in L$ and all $0 \leq i \leq n$.  Then $L = D_0[J]$.
\end{proposition}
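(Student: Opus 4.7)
The plan is to prove the two inclusions $D_0[J] \subseteq L$ and $L \subseteq D_0[J]$, each of which is essentially a one-step verification using the results already established.

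For the first inclusion, I would first note that $D_0[J]$ is itself an $R_0$-submodule of $D_0$ (by Proposition~\ref{Prop:closed}, since $D_0[J]$ is a dual space of an ideal). Every $q \in D_0[J]$ satisfies $q(f) = 0$ for all $f \in J$, and in particular $q(f_i) = 0$ for each generator. Thus $D_0[J]$ belongs to the collection of $R_0$-submodules annihilating all the $f_i$, so by the maximality of $L$ we get $D_0[J] \subseteq L$.

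For the second inclusion, I would take any $q \in L$ and show $q \in D_0[J]$, i.e.\ $q(f) = 0$ for every $f \in J$. Writing $f = \sum_{i=1}^n g_i f_i$ with $g_i \in R_0$, I use the $R_0$-module structure on $D_0$ to rewrite
\[ q(f) = \sum_{i=1}^n q(g_i f_i) = \sum_{i=1}^n (g_i \cdot q)(f_i). \]
Since $L$ is an $R_0$-submodule of $D_0$, each $g_i \cdot q$ lies in $L$, and by hypothesis $(g_i \cdot q)(f_i) = 0$. So $q(f) = 0$, giving $L \subseteq D_0[J]$.

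One small preliminary point worth addressing is the existence (well-definedness) of the maximal such submodule $L$: the sum of any family of $R_0$-submodules of $D_0$ that annihilate each $f_i$ is again such a submodule, so the collection has a unique maximal element, namely the sum of all its members. There is no real obstacle in this proof — the key conceptual point, which the statement is really isolating, is just that an $R_0$-submodule annihilating the generators automatically annihilates the entire ideal, because the action of $R_0$ on $D_0$ encodes exactly the ideal-theoretic action on $R_0$ via $q(gf) = (g\cdot q)(f)$.
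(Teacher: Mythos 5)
Your proof is correct. The first inclusion is argued exactly as in the paper: $D_0[J]$ is an $R_0$-submodule annihilating the generators, so maximality gives $D_0[J]\subseteq L$. For the second inclusion you take a different, more self-contained route. The paper passes through the ideal $J_L=\{f\in R_0 : q(f)=0 \text{ for all } q\in L\}$: since $L$ is an $R_0$-submodule, Proposition~\ref{Prop:closed} gives $L=D_0[J_L]$, and $J\subseteq J_L$ then yields $L\subseteq D_0[J]$ by the inclusion-reversing property of $\Dual$. You instead verify directly that any $q\in L$ annihilates an arbitrary $f=\sum_i g_i f_i\in J$ via $q(g_if_i)=(g_i\cdot q)(f_i)=0$, using closure of $L$ under the $R_0$-action. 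The two arguments have the same mathematical content --- your computation is precisely what makes $J_L$ an ideal in the proof of Proposition~\ref{Prop:closed} --- but yours avoids invoking the $L\leftrightarrow J_L$ correspondence and Corollary~\ref{Coro:DualIsInjective}, at the cost of writing out the generator expansion explicitly. Your remark on the existence of the maximal submodule (as the sum of all qualifying submodules) is a worthwhile point that the paper leaves implicit.
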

\begin{proof}
$D_0[J]$ is closed under differentiation and satisfies $q(f_i) = 0$ for all $q \in D_0[J]$ and $0 \leq i \leq n$, so $D_0[J] \subseteq L$.  The ideal $J_L$ contains $\{f_1,\ldots,f_n\}$, so $J \subseteq J_L$ which implies $L \subseteq D_0[J]$.
\end{proof}

\begin{remark}
 For an ideal $J \subset R_0$, the dual space $D_0[J]$ is finitely-generated as an $R_0$-module only when it is a finite dimensional vector space.  If $D_0[J]$ is generated by a single functional $p$, then $J$ is exactly the {\em apolar ideal} of $p$~(see, for instance, \cite{iarrobino1999power} for the definition). 
\end{remark}

A result of Proposition~\ref{Prop:closedness} is that for $I = \ideal{f_1,\dots,f_n}$, a dual element $q$ is in $D_0[I]$ if and only if $q(f_i) = 0$ and $x_j\cdot q \in D_0[J]$ for each $0 \leq i \leq n$ and $0 \leq j \leq N$.  Note that this leads to a completion scheme for computing~$D_y^k[I]$ proposed in \cite{Mourrain:inverse-systems}, assuming $y$ is in the vanishing set of $I$:
\begin{algorithmic}
\STATE $D_y^0[I] \leftarrow \Span_\bC(\p^0)$
\FOR{$i = 1 \to k$}
\STATE $D_y^i[I] \leftarrow \{ q\in D_y \mid x_j\cdot q \in D_y^{i-1}[I] \mbox{ for all } j=1,\ldots,N \mbox{ and } q(f_i) = 0 \mbox{ for all } i=1,\ldots,n \}$
\ENDFOR
\end{algorithmic}
Moreover, the above algorithm makes apparent that if $D_y^i[I] = D_y^{i+1}[I]$ for some $i \geq 0$ then $D_y^i[I]$ is equal to all higher truncations, and so is equal to $D_y[I]$.  This gives an effective stopping criterion for computing $D_y[I]$ when it is finite dimensional.

\begin{proposition}\label{Prop:intersection}
For ideals $J_1,J_2 \subset R_0$,
\begin{itemize}
 \item $D_0[J_1 + J_2] = D_0[J_1] \cap D_0[J_2].$
 \item $D_0[J_1 \cap J_2] = D_0[J_1] + D_0[J_2].$
\end{itemize}
\end{proposition}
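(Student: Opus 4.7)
The plan is to handle each equality separately, leveraging the bijective inclusion-reversing correspondence $\Dual$ between ideals of $R_0$ and $R_0$-submodules of $D_0$ established via Corollary~\ref{Coro:DualIsInjective} and Proposition~\ref{Prop:closed}.

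For the first equality $D_0[J_1 + J_2] = D_0[J_1] \cap D_0[J_2]$, I would argue directly from the definition. A functional $q$ lies in $D_0[J_1 + J_2]$ precisely when $q(f_1 + f_2) = 0$ for all $f_i \in J_i$, and by $\bC$-linearity (taking $f_2 = 0$, then $f_1 = 0$) this happens iff $q$ vanishes on $J_1$ and on $J_2$ separately. Conversely, linearity $q(f_1 + f_2) = q(f_1) + q(f_2)$ gives the other direction. This part requires no machinery.

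For the second equality, the easy direction $D_0[J_1] + D_0[J_2] \subseteq D_0[J_1 \cap J_2]$ follows because each summand annihilates $J_1 \cap J_2 \subseteq J_i$, and the right side is a linear subspace. The reverse inclusion is the main obstacle, since it is not obvious how to decompose a functional $q$ annihilating $J_1 \cap J_2$ as $q = q_1 + q_2$ with $q_i \in D_0[J_i]$. My plan is to sidestep this by the bijection argument: set $L = D_0[J_1] + D_0[J_2]$, which is $R_0$-stable (the sum of two $R_0$-submodules), so by Proposition~\ref{Prop:closed} there is a unique ideal $J_L \subset R_0$ with $D_0[J_L] = L$.

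Now I would identify $J_L$ with $J_1 \cap J_2$. Since $L \supseteq D_0[J_i]$, Corollary~\ref{Coro:DualIsInjective} gives $J_L \subseteq J_i$ for $i = 1, 2$, hence $J_L \subseteq J_1 \cap J_2$. Conversely, any $f \in J_1 \cap J_2$ is annihilated by every element of $D_0[J_1]$ and of $D_0[J_2]$, therefore by every element of their sum $L$, forcing $f \in J_L$ via the characterization of $J_L$ from the proof of Proposition~\ref{Prop:closed}. Thus $J_L = J_1 \cap J_2$, and applying $\Dual$ yields $D_0[J_1 \cap J_2] = L = D_0[J_1] + D_0[J_2]$, completing the proof. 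The only point that could trip one up is remembering to verify that $L$ is $R_0$-stable before invoking the bijection; everything else is formal.
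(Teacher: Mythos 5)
Your proposal is correct and follows essentially the same route as the paper: the first equality and the easy inclusion of the second are read off from the definition, and the hard inclusion is obtained by noting $L = D_0[J_1]+D_0[J_2]$ is an $R_0$-submodule, hence $L = D_0[J_L]$ for some ideal $J_L \subseteq J_1 \cap J_2$, and then applying the inclusion-reversing correspondence. The only (harmless) difference is that you go on to identify $J_L$ with $J_1\cap J_2$ exactly, where the paper stops at $J_L \subseteq J_1 \cap J_2$, which already yields $D_0[J_1\cap J_2]\subseteq L$.
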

\begin{proof}
The first statement follows from the definition of the dual space, as does $D_0[J_1] + D_0[J_2] \subset D_0[J_1 \cap J_2].$

Let $L = D_0[J_1] + D_0[J_2]$.  It's clear that $L$ is an $R_0$-submodule, so it is the dual space of an ideal $J_L$.  The fact that $D_0[J_1] \subset L$ implies $J_L \subset J_1$ and similarly $J_L \subset J_2$.  Therefore $D_0[J_1 \cap J_2] \subset L$.
\end{proof}

\begin{corollary}\label{Coro:truncated-for-homogeneous} If $J_1$ and $J_2$ are homogeneous ideals of $R_0$, then the equality holds for the truncated dual spaces:
$$D_0^d[J_1\cap J_2] = D_0^d[J_1]+D_0^d[J_2],\ \text{for all }d\in \bN.$$
\end{corollary}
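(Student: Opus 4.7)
The plan is to reduce the claim to Proposition~\ref{Prop:intersection} and then to exploit homogeneity to commute truncation with the sum. From Proposition~\ref{Prop:intersection} we already know $D_0[J_1\cap J_2] = D_0[J_1]+D_0[J_2]$, so intersecting both sides with $D_0^d$ gives
\[ D_0^d[J_1\cap J_2] = (D_0[J_1]+D_0[J_2])\cap D_0^d. \]
It remains to show that for the homogeneous subspaces $L_i := D_0[J_i]$ one has $(L_1+L_2)\cap D_0^d = (L_1\cap D_0^d) + (L_2\cap D_0^d)$, i.e., that truncation distributes over sums of homogeneous subspaces. Homogeneity of $L_1$ and $L_2$ is exactly Corollary~\ref{Coro:HomoMono} applied to the hypothesis that $J_1,J_2$ are homogeneous.

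The main (and only non-trivial) step is the subspace identity. I would prove the nontrivial inclusion as follows: take $q\in (L_1+L_2)\cap D_0^d$ and write $q = p_1 + p_2$ with $p_i\in L_i$. Decompose each $p_i$ into its homogeneous parts $p_i = \sum_k (p_i)_k$; since $L_i$ is homogeneous, every piece $(p_i)_k$ lies in $L_i$. Because $\ord q \le d$, the homogeneous parts of $p_1+p_2$ in orders $k>d$ must vanish, which gives $(p_1)_k = -(p_2)_k$ for all $k>d$. Setting
\[ \tilde p_1 := \sum_{k\le d}(p_1)_k, \qquad \tilde p_2 := p_2 + \sum_{k>d}(p_1)_k = \sum_{k\le d}(p_2)_k, \]
we still have $\tilde p_i\in L_i$ (using homogeneity of $L_i$), now $\ord \tilde p_i\le d$, and $\tilde p_1+\tilde p_2 = q$. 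Thus $q\in L_1^d + L_2^d$, which is the desired inclusion.

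Combining the two steps yields $D_0^d[J_1\cap J_2] = L_1^d + L_2^d = D_0^d[J_1]+D_0^d[J_2]$. I do not anticipate a serious obstacle: the only subtlety is that for general (non-homogeneous) subspaces the truncation can strictly contain the sum of truncations, so the homogeneity assumption is essential and must be invoked at exactly the step where one redistributes the high-order cancelling pieces between $p_1$ and $p_2$.
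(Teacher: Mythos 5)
Your proof is correct and follows essentially the same route as the paper: the paper's one-line argument also reduces to Proposition~\ref{Prop:intersection} plus the fact that for a homogeneous ideal $J$ the homogeneous parts $q_k$ of any $q\in D_0[J]$ again lie in $D_0[J]$, which is exactly the closure property you use to redistribute the cancelling high-order pieces. Your write-up just makes the truncation step explicit.
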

\begin{proof}
  This follows from the fact that if $q\in D_0[J]$ for a homogeneous $J$, then $q_d$, the part of $q$ of order $d$, is also in $D_0[J]$.
\end{proof}

\begin{remark}
  For truncated dual space, in general, only one inclusion holds:
  $$D_0^k[J_1\cap J_2] \supset D_0^k[J_1]+D_0^k[J_2].$$
  However, because $D_0^k[J_1\cap J_2]$ is finite dimensional, it follows that
  $$D_0^k[J_1\cap J_2] \subset D_0^l[J_1]+D_0^l[J_2]$$
  for $l$ large enough.
\end{remark}

\begin{example}
Let $I_1 = \ideal{x_1}$ and $I_2 = \ideal{x_1-x_2^2}$ in $R = \bC [x_1,x_2]$.  Then
\[ D_0^1[I_1] + D_0^1[I_2] = \Span\{1,\partial_2\}, \]
\[ D_0^1[I_1\cap I_2] = \Span\{1,\partial_1,\partial_2\}, \]
\[ D_0^2[I_1] + D_0^2[I_2] = \Span\{1,\partial_2,\partial_2^2,\partial_2^2+\partial_1\}. \]
There are strict inclusions
\[ D_0^1[I_1] + D_0^1[I_2] \subsetneq D_0^1[I_1\cap I_2] \subsetneq D_0^2[I_1] + D_0^2[I_2]. \]
\end{example}

\subsection{Dual space of quotient ideals}\label{Subsec:quotient-dual}
Recall that for $g \in R_0$, the map $\der_{g}:D_0 \to D_0$ denotes the action of $g$ on $D_0$ by pre-multiplication, or equivalently by ``differentiation'' with respect to $g$.

\begin{proposition}
 For all non-zero $g\in R_0$, the map $\der_g:D_0 \to D_0$ is surjective and $\ker \der_g = D_0[\ideal{g}]$.
\end{proposition}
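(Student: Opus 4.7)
My plan is to handle the kernel and the surjectivity separately. The kernel claim is immediate from the definition: $q \in \ker \der_g$ iff $q(gf) = 0$ for every $f \in R_0$, which is precisely the condition that $q$ annihilates the principal ideal $\ideal{g} = g R_0$, so $q \in D_0[\ideal{g}]$.

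For surjectivity, I would first reduce to the case of a polynomial $g$: writing $g = h/u$ with $h \in R$ and $u(0) \neq 0$ gives $\der_g = \der_{1/u} \circ \der_h$, and $\der_{1/u}$ is an isomorphism of $D_0$, so it suffices to treat $g = h \in R$. Let $k = \ord(g)$ be the smallest degree with nonzero homogeneous component $g_k$ in the decomposition $g = g_k + g_{k+1} + \cdots$. A short calculation shows $\der_g(D_0^{d+k}) \subseteq D_0^d$ for every $d \geq 0$: if $q \in D_0^{d+k}$ and $f \in \fm^{d+1}$, then $gf \in \fm^{d+k+1}$, so $(\der_g q)(f) = q(gf) = 0$.

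The heart of the argument is to identify $\der_g|_{D_0^{d+k}} : D_0^{d+k} \to D_0^d$ as the transpose of the multiplication map $m_g : R/\fm^{d+1} \to R/\fm^{d+k+1}$, under the perfect pairing $D_0^j \times R/\fm^{j+1} \to \bC$ given by $(q, f) \mapsto q(f)$. This identification is just an unpacking of definitions; note that $m_g$ is well-defined because $g \in \fm^k$. It then suffices to show $m_g$ is injective, because the transpose of an injective linear map between finite-dimensional spaces is surjective. If $f \in R \setminus \fm^{d+1}$ has lowest-order nonzero homogeneous component $f_j$ with $j \leq d$, then the lowest-order part of $gf$ is $g_k f_j$, which is nonzero of degree $j + k \leq d + k$ since $\bC[x_1, \ldots, x_N]$ is a domain. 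Hence $gf \notin \fm^{d+k+1}$, and $m_g$ is injective. Consequently $\der_g : D_0^{d+k} \to D_0^d$ is surjective, and since every $p \in D_0$ lies in some $D_0^d$, the map $\der_g : D_0 \to D_0$ is surjective.

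The step requiring the most care is the identification of $\der_g$ with the transpose of $m_g$, in particular keeping track of how the filtrations on $D_0$ and on $R/\fm^{\bullet}$ correspond. The essential mathematical input, buried at the very end, is the domain property of the polynomial ring, which is what prevents $g$ from lowering orders by more than $k$.
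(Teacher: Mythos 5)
Your proof is correct, and the surjectivity argument takes a genuinely different route from the paper's. The paper argues abstractly: $g\cdot D_0$ is an $R_0$-submodule of $D_0$, hence by Proposition 2.10 it is the dual space of some ideal $J$; if $\der_g$ were not surjective, $J$ would contain a nonzero $f$, and since $gf\neq 0$ some $\p^\alpha$ pairs nontrivially with $gf$, so $g\cdot\p^\alpha$ fails to annihilate $f$ --- a contradiction. Your argument instead works degree by degree: after reducing to polynomial $g$ via the unit $u$, you exhibit $\der_g|_{D_0^{d+k}}:D_0^{d+k}\to D_0^d$ as the transpose of the injective multiplication map $R/\fm^{d+1}\to R/\fm^{d+k+1}$ under the perfect pairings, and conclude surjectivity by finite-dimensional duality. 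Both proofs ultimately rest on the domain property (the paper via $gf\neq 0$, you via $g_kf_j\neq 0$), but yours is self-contained linear algebra, bypassing the ideal--submodule bijection, and it yields a quantitative refinement the paper's proof does not: $\der_g$ maps $D_0^{d+k}$ \emph{onto} $D_0^d$ where $k=\ord g$, which is exactly the kind of control one wants when computing with truncated dual spaces (compare Proposition 4.3). The trade-off is length and the bookkeeping of filtrations you yourself flag; the paper's version is three lines once Proposition 2.10 is in hand. All the individual steps check out: the reduction $\der_g=\der_{1/u}\circ\der_h$ is valid because multiplication by a unit is an automorphism of the $R_0$-module $D_0$, the pairing $(q,f)\mapsto q(f)$ is perfect since $\{\p^\alpha\}_{|\alpha|\le j}$ and $\{x^\alpha\}_{|\alpha|\le j}$ are dual bases, and the kernel computation matches the paper's.
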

\begin{proof}
 Note $g\cdot D_0$ is closed under differentiation.  If $\der_g$ is not surjective, then $g\cdot D_0$ is the dual space of some non-trivial ideal $I \subset R_0$ by Proposition~\ref{Prop:closed}.  Choose some non-zero $f \in I$.  Since $gf \neq 0$, there exists some functional $q$ with $q(gf) \neq 0$.  Then $g\cdot q(f) = q(gf) \neq 0$, which is a contradiction since $g\cdot q$ should annihilate $f$.  To show $\ker \der_g = D_0[\ideal{g}]$, if $q \in D_0[\ideal{g}]$ then $g\cdot q(f) = q(gf) = 0$ for all $f \in R_0$.  The only functional that is zero on all elements of $R_0$ is the zero functional so $g\cdot q = 0$.  Conversely if $q \notin D_0[\ideal{g}]$ then $g\cdot q(f) = q(gf) \neq 0$ for some $f \in R$, so $g\cdot q \neq 0$.
\end{proof}

\begin{theorem}\label{thm:dualOfColonIdeal}
	$D_0[I:\ideal{g}] = g\cdot D_0[I]$.
\end{theorem}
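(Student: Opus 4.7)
My plan is to work through the bijection $\Dual: \{\text{ideals of } R_0\} \to \{R_0\text{-submodules of } D_0\}$ established by Corollary~\ref{Coro:DualIsInjective} and Proposition~\ref{Prop:closed}. The first step is to verify that $g \cdot D_0[I]$ is in fact an $R_0$-submodule of $D_0$. This follows from commutativity of $R_0$: for any $h \in R_0$ and $q \in D_0[I]$, one has $h \cdot (g \cdot q) = (hg) \cdot q = g \cdot (h \cdot q)$, and $h \cdot q \in D_0[I]$ because $D_0[I]$ is itself an $R_0$-submodule. Thus by the bijection, $g \cdot D_0[I] = D_0[J]$ for a unique ideal $J \subset R_0$, and the task reduces to identifying $J$ with $I:\ideal{g}$.

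The identification is a direct unpacking. By the description $J_L = \{f : q(f) = 0 \text{ for all } q \in L\}$ from Proposition~\ref{Prop:closed}, an element $f \in R_0$ lies in $J$ iff $(g \cdot q)(f) = 0$ for every $q \in D_0[I]$, iff $q(gf) = 0$ for every $q \in D_0[I]$. By Proposition~\ref{Prop:IdealFromDual} applied to $I$, this last condition is equivalent to $gf \in I R_0$, i.e., $f \in I:\ideal{g}$. Hence $J = I:\ideal{g}$, and applying $\Dual$ gives $g \cdot D_0[I] = D_0[I:\ideal{g}]$.

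As a sanity check, one could attempt to prove the two inclusions directly. The containment $g \cdot D_0[I] \subseteq D_0[I:\ideal{g}]$ is immediate: for $q \in D_0[I]$ and $f \in I:\ideal{g}$, $(g \cdot q)(f) = q(gf) = 0$ since $gf \in I$. The reverse containment would use surjectivity of $\der_g$ from the preceding proposition to lift $p \in D_0[I:\ideal{g}]$ to some $q_0 \in D_0$ with $g \cdot q_0 = p$, and then adjust $q_0$ by an element of $\ker \der_g = D_0[\ideal{g}]$ so that the adjusted functional lies in $D_0[I]$. I expect this adjustment step to be the real obstacle on the direct path, because it requires controlling the fibers of $\der_g$ relative to $D_0[I]$. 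The $\Dual$-bijection route above sidesteps the issue entirely by checking which ideal corresponds to the submodule $g \cdot D_0[I]$, so it is the approach I would favor.
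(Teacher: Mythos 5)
Your proof is correct relative to the paper's stated foundations, and it takes a genuinely different route. The paper proves the easy inclusion $g\cdot D_0[I]\subseteq D_0[I:\ideal{g}]$ directly, and for the reverse inclusion lifts $q\in D_0[I:\ideal{g}]$ to some $p$ with $g\cdot p=q$ via surjectivity of $\der_g$, observes that $p\in D_0[g(I:\ideal{g})]=D_0[I\cap\ideal{g}]=D_0[I]+D_0[\ideal{g}]$ by Proposition~\ref{Prop:intersection}, and writes $p=p'+u$ with $p'\in D_0[I]$ and $u\in\ker\der_g=D_0[\ideal{g}]$, so that $q=g\cdot p'$. Thus the ``adjustment step'' you flag as the real obstacle on the direct path is in fact dispatched cleanly by the sum formula for $D_0[I\cap\ideal{g}]$ together with the kernel computation. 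Your main argument instead routes everything through the bijection $\Dual$: you verify that $g\cdot D_0[I]$ is an $R_0$-submodule and identify its annihilator ideal as $I:\ideal{g}$. This is shorter and arguably more conceptual, but note that it concentrates the entire content of the hard inclusion into the biduality claim $D_0[J_L]=L$ of Proposition~\ref{Prop:closed}, which is the least-justified step in the paper's foundations (the proof there really only gives $L\subseteq D_0[J_L]$ and asserts the converse); the paper's proof makes the lifting mechanism explicit, though it too ultimately leans on Proposition~\ref{Prop:closed} through Proposition~\ref{Prop:intersection}, so the two arguments rest on comparable footing. One small point to tidy: Proposition~\ref{Prop:IdealFromDual} is stated for polynomials $f$, whereas in your computation $f$ and $gf$ range over $R_0$; this is harmless because $D_0[I]$ is an $R_0$-submodule, so $q(gf)=0$ for all $q\in D_0[I]$ if and only if the same holds after clearing the denominator of $gf$, but it deserves a sentence.
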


\begin{proof}
For $I$ homogeneous, the statement is shown in~\cite[Theorem 22]{Hauenstein:preprint2011}. Here we consider the general case.

If $p \in D_0[I]$, then $g\cdot p(f) = p(gf) = 0$ for all $f$ such that $gf \in I$.  These are precisely the polynomials $f$ in $I:\ideal{g}$, and so $g\cdot p \in D_0[I:\ideal{g}]$.

For any $q \in D_0[I:\ideal{g}]$, because $\der_g$ is surjective we can choose some $p \in D_0$ such that $g\cdot p = q$.  Then for all $f \in I:\ideal{g}$, we have $q(f) = p(gf) = 0$, so \[p \in D_0[g(I:\ideal{g})] = D_0[I \cap \ideal{g}] = D_0[I] + D_0[\ideal{g}]. \]
Therefore $p = p' + u$ for some $p' \in D_0[I]$ and $u \in D_0[\ideal{g}]$.  Then $q = g\cdot p = g\cdot p' + g\cdot u$ but $g\cdot u = 0$ so $q \in g\cdot D_0[I]$.
\end{proof}

\section{Local Hilbert function and its regularity}\label{Sec:Hilbert}

The Hilbert function of an ideal $I \subset R_0$ provides combinatorial information about $I$ that can be computed numerically using truncated dual spaces.  

\subsection{Primal and dual monomial order}
Let $\geq$ be a local monomial order ($1$ is the largest monomial), which we shall refer to as a {\em primal order}. For
$g = \sum_{\alpha} a_\alpha x^\alpha$, a nonzero polynomial,
the {\em initial term} with respect to $\geq$ is
the largest monomial with respect to $\geq$ that has a nonzero coefficient, namely
$$\initial_{\geq}(g) = \max_\geq\{x^\alpha~|~a_\alpha\neq 0\}.$$
For an ideal $I$, the {\it initial terms} of $I$ with respect to $\geq$
is the set of initial terms with respect to $\geq$ of all the elements of $I$, namely
$$\initial_{\geq}(I) = \{\initial_{\geq}(f)~|~f \in I\}.$$
A monomial is called a {\it standard monomial} of $I$
with respect to $\geq$ if it is not a member of $\initial_\geq(I)$.

We shall order the monomial differential functionals via the {\em dual order}:
$$
\partial^\alpha \dgeq \partial^\beta\ \Leftrightarrow\ x^\alpha \leq x^\beta,
$$
the order opposite to $\geq$.

The {\em initial term} $\initial_\dgeq(q)$ of $q$
is the largest monomial differential functional that has a nonzero coefficient.
The {\em initial support} of a dual space with respect to $\dgeq$
is the set of initial terms with respect to $\dgeq$ of all the elements in the dual space (which can be considered as a subset of $\POStoN$).

A dual basis that has distinct initial terms is called a {\em reduced dual basis}.
Using a (possibly infinite dimensional) Gaussian elimination procedure,
it is easy to see that any dual basis can be transformed into a reduced dual basis.

\begin{theorem}[Theorem~3.1 of \cite{LVZ}]\label{Thm:LVZ}
Let $I_0$ be a 0-dimensional ideal of $R_0$.
The initial support of the dual space $D_0[I_0]$ is the set of standard monomials for $I=I_0\cap R$, i.e.,
\begin{equation}\label{Eq:InitStdMon}
\initial_\succeq(D_0[I_0]) = \initial_\succeq(D_0[I]) = \{\partial^\alpha~|~x^\alpha \notin \initial_\geq(I)\}.
\end{equation}
\end{theorem}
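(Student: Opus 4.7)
The plan is to reduce the theorem to a dimension count combined with a single pairing argument. First I would dispense with the second equality: since $I_0 \subset R_0$ is zero-dimensional, the only prime of $R_0$ that can contain it is $\fm R_0$, so $I_0$ is $\fm R_0$-primary; by Remark~\ref{Rem:OneToOne} we then have $I_0 = IR_0$, and the earlier proposition identifying $D_0[I]$ with $D_0[IR_0]$ yields $D_0[I_0] = D_0[I]$, and hence the equality of their initial supports.

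For the main equality, I would first match cardinalities. By Theorem~\ref{Thm:dimDual}, $\dim_\bC D_0[I_0]$ equals the multiplicity $m$ of $0$, which is also $\dim_\bC R_0/I_0$; the standard monomials for $I$ form a $\bC$-basis of $R_0/I_0$, so their number is $m$. On the dual side, a reduced dual basis for $D_0[I_0]$ consists of $m$ elements with pairwise distinct initial terms, giving $|\initial_\succeq(D_0[I_0])| \geq m$; the reverse inequality holds because any collection of dual elements with distinct initial terms is linearly independent. Hence both sides of the claimed equality have cardinality $m$, and it suffices to establish one containment.

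I would prove $\initial_\succeq(D_0[I_0]) \subseteq \{\partial^\alpha \mid x^\alpha \notin \initial_\geq(I)\}$ by contradiction. Suppose $q \in D_0[I]$ satisfies $\initial_\succeq(q) = \partial^\alpha$ and that there exists $f \in I$ with $\initial_\geq(f) = x^\alpha$. Unwinding the definitions of the two orders, we may write
\[ q = d_\alpha\, \partial^\alpha + \sum_{x^\gamma > x^\alpha} d_\gamma\, \partial^\gamma, \qquad f = c\, x^\alpha + \sum_{x^\beta < x^\alpha} c_\beta\, x^\beta, \]
with $d_\alpha, c \neq 0$: the first sum runs only over $\gamma$ with $x^\gamma$ \emph{strictly larger} than $x^\alpha$ in the primal order (because $\partial^\gamma \prec \partial^\alpha$ in the dual order), and the second only over $\beta$ with $x^\beta$ \emph{strictly smaller} than $x^\alpha$. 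The pairing collapses to $q(f) = c\, d_\alpha \neq 0$, since no monomial of $f$ other than $x^\alpha$ is hit by a nonzero coefficient of $q$, contradicting $q \in D_0[I]$.

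The main obstacle is purely bookkeeping: keeping the two opposite orders straight. The content of the argument is the observation that $\initial_\succeq(q) = \partial^\alpha$ and $\initial_\geq(f) = x^\alpha$ force the nontrivial parts of $q$ and $f$ to be supported on \emph{opposite} sides of $x^\alpha$ in the primal order, so the pairing reduces to a single nonzero term; once that is seen, everything else is formal.
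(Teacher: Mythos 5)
Your argument is correct, but it reaches the conclusion by a genuinely different route than the paper. The containment you do prove, $\initial_\succeq(D_0[I_0]) \subseteq \{\partial^\alpha \mid x^\alpha \notin \initial_\geq(I)\}$, is essentially the paper's first direction (the paper phrases it contrapositively: a monic $f$ with $\LT(f)=x^\alpha$ pairs to $1$ against a monic dual element with lead term $\partial^\alpha$, so $f\notin I$). Where you diverge is the reverse containment: the paper proves it constructively, taking a monic reduced dual basis and, for each $\partial^\alpha$ outside the initial support, explicitly assembling a polynomial with lead term $x^\alpha$ annihilated by the whole basis, which lies in $I_0$ by Proposition~\ref{Prop:IdealFromDual}. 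You instead replace this with a cardinality count: both sets are finite of size $m$, so one containment suffices. This is shorter, but it is load-bearing on two external inputs: Macaulay's Theorem~\ref{Thm:dimDual} (stated in the paper without proof), and the fact that the standard monomials of $I$ with respect to a local order form a $\bC$-basis of $R_0/IR_0$ (Macaulay's basis theorem for local orders / standard basis theory \`a la Mora--Grauert--Hironaka). The latter is nowhere stated in the paper and is not trivial; you should cite it explicitly, since it carries essentially the entire weight of the direction you do not prove directly. It is not circular --- it is a purely primal statement proved independently of dual spaces --- but be aware that the paper's Corollary~\ref{Coro:dim-local-dual} derives the analogous dimension count \emph{from} this theorem, so you could not appeal to that corollary here. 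The trade-off: the paper's proof is self-contained and yields an explicit witness polynomial (useful algorithmically, e.g.\ for recovering standard bases from dual bases), while yours is cleaner at the cost of importing standard-basis theory.
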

\begin{proof}
 Note that $D_0[I]$ is finite dimensional.  Choose a monic reduced basis $B$ for $D_0[I]$ such that the lead term of each element does not occur in any other element (using Gaussian elimination).
 
 Suppose $\p^\alpha\in \LTg(D_0[I])$ so some $p \in B$ has $\LTg(p) = \p^\alpha$.  For any monic polynomial $f$ with $\LT(f) = x^\alpha$, $f$ and $p$ have no terms with the same exponent except their respective lead terms, so $p(f) = 1$ and $f \notin I$.
 
 Suppose $\p^\alpha \notin \LTg(D_0[I])$.  Let $\{p_1,\ldots,p_s\} \subset B$ be the basis elements with $\p^\alpha$ in their monomial support.  For each $p_i$ let $c_i$ be the coefficient of $\p^\alpha$ and let $\p^{\beta_i} = \LTg(p_i)$.  The following polynomial
  \[ f = x^\alpha + \sum_{i = 1}^s \frac{x^{\beta_i}}{c_i} \]
 has $p(f) = 0$ for all $p \in B$, and $\LT(f) = x^\alpha$.  By Proposition~\ref{Prop:IdealFromDual}, $f \in I_0$ so $x^\alpha \in \LT(I_0) = \LT(I)$.
\end{proof}

\begin{corollary}\label{Coro:dim-local-dual}
For an ideal $I\subset R_0$, $\dim_\bC \left(R_0/(I + \fm^{k+1})\right) = \dim_\bC D_0^{k}[I]$ where $\fm = \ideal{x_1,\ldots,x_N}$.
\end{corollary}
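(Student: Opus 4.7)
The plan is to reduce the statement to a finite-dimensional duality by observing that truncation at order $k$ on the dual side corresponds exactly to adding $\fm^{k+1}$ on the ideal side.

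First I would establish the identity
\[ D_0^k[I] = D_0[I + \fm^{k+1}]. \]
The inclusion ``$\subseteq$'' is immediate: a functional $q$ of order $\leq k$ annihilates every monomial of degree $\geq k+1$, hence annihilates all of $\fm^{k+1}$, and if it also annihilates $I$ then it annihilates $I+\fm^{k+1}$. For ``$\supseteq$'', any $q \in D_0[I+\fm^{k+1}]$ annihilates $\fm^{k+1}$, and since $\p^\alpha[0](x^\alpha) = 1$, no nonzero homogeneous piece of order $>k$ can annihilate all degree-$(k{+}1)$ monomials, forcing $\ord q \leq k$.

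Next I would pair the finite-dimensional spaces. Writing $R^k$ for polynomials of degree $\leq k$ and $I^k = \{f^k : f \in I\} \subset R^k$ (as in the proof of Proposition~\ref{Prop:IdealFromDual}), the pairing $(q,f) \mapsto q(f)$ identifies $D_0^k[I]$ with the annihilator $(I^k)^\perp$ inside the dual of the finite-dimensional space $R^k$. Hence
\[ \dim_\bC D_0^k[I] = \dim_\bC R^k - \dim_\bC I^k = \dim_\bC (R^k / I^k). \]
Finally I would identify $R^k/I^k$ with $R_0/(I+\fm^{k+1})$: the natural map $R^k \to R/(I+\fm^{k+1})$ is surjective (every polynomial agrees with its degree-$k$ truncation modulo $\fm^{k+1}$) with kernel exactly $I^k$, and localization at $\fm$ leaves $R/(I+\fm^{k+1})$ unchanged since $\fm^{k+1}$ annihilates it (elements of $R\setminus\fm$ become invertible modulo $\fm^{k+1}$). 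This chain of isomorphisms gives the claimed equality of dimensions.

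The argument is essentially bookkeeping; the only point that merits care is the first identity $D_0^k[I] = D_0[I+\fm^{k+1}]$, which is precisely what links the filtration on $D_0$ with the $\fm$-adic filtration on $R_0$. Alternatively, one could invoke Theorem~\ref{Thm:dimDual} applied to the $\fm$-primary ideal $I+\fm^{k+1}$ after establishing the first identity, bypassing the explicit finite-dimensional duality, but the direct pairing argument above seems more transparent and self-contained.
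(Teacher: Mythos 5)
Your proof is correct, but it takes a genuinely different route from the paper's. The paper chooses a graded primal order and invokes Theorem~\ref{Thm:LVZ}: the standard monomials of $I$ of degree $\leq k$ form a vector space basis of $R_0/(I+\fm^{k+1})$ and simultaneously index the initial support of $D_0^k[I]$, so the two dimensions coincide. You instead avoid monomial orders altogether and argue by pure linear duality: the perfect pairing between $R^k$ and the span of $\{\p^\alpha : |\alpha|\leq k\}$ gives $D_0^k[I]=(I^k)^\perp$, hence $\dim_\bC D_0^k[I]=\dim_\bC(R^k/I^k)$, and you then identify $R^k/I^k$ with $R_0/(I+\fm^{k+1})$ by checking that the kernel of $R^k\to R/(I+\fm^{k+1})$ is exactly $I^k$ and that localization is harmless on an $\fm^{k+1}$-torsion module. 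Your route is more elementary and self-contained --- it essentially fleshes out the identity $(I^k)^\perp = D_0^k[I] = D_0[I+\fm^{k+1}]$ that the paper already asserts without proof inside the proof of Proposition~\ref{Prop:IdealFromDual}, and it does not depend on the machinery of reduced dual bases behind Theorem~\ref{Thm:LVZ}. What the paper's approach buys in exchange is an explicit combinatorial basis (the standard monomials) on both sides, which is the viewpoint used throughout Section~\ref{Sec:Hilbert}. The only cosmetic slips in your write-up are that the corollary is stated for $I\subset R_0$ rather than $I\subset R$ (handled by Remark~\ref{Rem:OneToOne}, as the paper itself does implicitly), and that in your first identity the relevant test monomials are those of degree $\geq k+1$, not exactly $k+1$; neither affects the argument.
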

\begin{proof}
Choosing a graded primal order $\geq$, a vector space basis for the quotient $R_0/(I + \fm^{k+1})$ is the set of monomials
\[ \{x^\alpha~|~x^\alpha \notin \initial_\geq(I), \text{ and } |\alpha| \leq k\}. \]
By Theorem~\ref{Thm:LVZ} this is corresponds to a basis for $\initial_\succeq(D_0^{k}[I])$ which has the same dimension as $D_0^{k}[I]$.
\end{proof}

We can extend Theorem~\ref{Thm:LVZ} to ideals of arbitrary dimensions.

\begin{theorem}\label{thm:complementary-staircases}
For an ideal $I \subset R$ the monomial lattice $\bN^N$ is a disjoint union of $\LTg D_0[I]$ and $\LT I$.
\end{theorem}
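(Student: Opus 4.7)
The plan is to prove disjointness and covering of $\bN^N$ separately, deducing the latter from the zero-dimensional Theorem~\ref{Thm:LVZ} via truncation by $\fm^{k+1}$.

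For \emph{disjointness}, I would suppose $x^\alpha = \LT_\geq(f)$ for some $f = \sum_\gamma c_\gamma x^\gamma \in I$ and $\p^\alpha = \LTg(q)$ for some $q = \sum_\beta d_\beta \p^\beta \in D_0[I]$, and derive $q(f) \neq 0$. Using the normalization $\p^\beta(x^\gamma) = \delta_{\beta,\gamma}$ one has $q(f) = \sum_\beta c_\beta d_\beta$. The constraint $c_\beta \neq 0$ forces $x^\beta \leq x^\alpha$ in the primal order, while $d_\beta \neq 0$ forces $\p^\beta \dgeq \p^\alpha$ to fail strictly in the dual order, i.e. $x^\beta \geq x^\alpha$ in the primal order. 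The only index $\beta$ for which both hold is $\beta = \alpha$, so $q(f) = c_\alpha d_\alpha \neq 0$, contradicting $q \in D_0[I]$.

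For \emph{covering}, I would fix $\alpha$ with $x^\alpha \notin \LT I$ and produce some $q \in D_0[I]$ with $\LTg q = \p^\alpha$ by shifting to the zero-dimensional world. For $k \geq |\alpha|$ the ideal $I_k := I + \fm^{k+1}$ is $\fm$-primary, so by Remark~\ref{Rem:OneToOne} it extends to a zero-dimensional ideal of $R_0$ with $I_k R_0 \cap R = I_k$. Using the identification $D_0[I_k] = D_0^k[I]$ (already noted in the proof of Proposition~\ref{Prop:IdealFromDual}, since the extra condition $q(\fm^{k+1}) = 0$ is exactly $\ord q \leq k$), Theorem~\ref{Thm:LVZ} applied to $I_k$ tells us $\LTg D_0[I_k]$ is the complement of $\LT I_k$ among monomials. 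The key step is checking $x^\alpha \notin \LT I_k$: any $g \in I_k$ with $\LT_\geq g = x^\alpha$ decomposes as $g = f + h$ with $f \in I$ and $h \in \fm^{k+1}$; under a graded local order every term of $h$ has degree $> k \geq |\alpha|$ and is therefore strictly below $x^\alpha$ in the primal order, so $\LT_\geq f = x^\alpha$ as well, contradicting $x^\alpha \notin \LT I$. Thus $\p^\alpha \in \LTg D_0[I_k] \subseteq \LTg D_0[I]$.

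The main technical obstacle is the step $x^\alpha \notin \LT I_k$, which is where the structure of the local order enters: for a graded local order it is immediate, and for a general local order one chooses $k$ large enough that all minimal generators of $\fm^{k+1}$ lie strictly below $x^\alpha$ in the primal order (the monomials above a fixed $x^\alpha$ in a local order may be infinite in number, but one can still push past them by enlarging $k$ along the degree direction relevant to $\fm^{k+1}$). Disjointness, by contrast, is a clean bilinear-pairing calculation using only that the dual order reverses the primal, and covering reduces cleanly to Theorem~\ref{Thm:LVZ} once the truncation is correctly set up.
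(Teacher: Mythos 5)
Your proof is correct for a graded local order and its core is the same reduction the paper uses: apply Theorem~\ref{Thm:LVZ} to the $\fm$-adic truncations $I+\fm^{k+1}$ and pass to the limit via $D_0[I]=\bigcup_k D_0^k[I]$. The differences are in the packaging. You prove disjointness independently by the bilinear-pairing computation $q(f)=c_\alpha d_\alpha\neq 0$, whereas the paper gets both directions at once from the set-theoretic complementation $\bN^N\setminus\bigcup_k \LTg D_0[I+\fm^{k+1}]=\bigcap_k\LT(I+\fm^{k+1})$; your direct argument is a nice sanity check but is logically redundant once the covering step is done, since LVZ already gives equality of complements at each level $k$. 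More valuably, you make explicit the step the paper compresses into the unproved equality $\bigcap_k\LT(I+\fm^{k+1})=\LT(IR_0\cap R)$, namely that $x^\alpha\notin\LT I$ implies $x^\alpha\notin\LT(I+\fm^{k+1})$ for $k\geq|\alpha|$; for a graded order your degree argument settles this cleanly (only the inclusion $\bigcap_k\LT(I+\fm^{k+1})\subseteq\LT I$ is nontrivial, and that is exactly what you check).

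One caution: your parenthetical fix for non-graded local orders does not work as stated. For a general local order the set of monomials above a fixed $x^\alpha$ can meet every power of $\fm$: with the negative lexicographic order on $\bC[x,y]$ (exponent vectors compared by reverse lex) one has $y^k > x$ for all $k$, so $\fm^{k+1}$ has a minimal generator above $x$ for every $k$, and no choice of $k$ ``pushes past'' them. So your argument genuinely requires the order to be degree-compatible. This is not a defect relative to the paper --- the paper's own proof silently needs the same hypothesis, and graded orders are what it uses elsewhere (e.g., Corollary~\ref{Coro:dim-local-dual}) --- but you should either assume $\geq$ is graded or delete the claim that the general case follows by enlarging $k$.
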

\begin{proof}
By Theorem~\ref{Thm:LVZ}, $\bN^N \setminus \LTg D_0[I + \fm^{k+1}] = \LT (I+\fm^{k+1})$, so then
 \[ \bN^N \setminus \bigcup_k  \LTg D_0[I + \fm^{k+1}] = \bigcap_k \LT (I+\fm^{k+1}). \]
  By definition $\bigcup_k  \LTg D_0[I + \fm^{k+1}] = \LTg D_0[I]$, while by Lemma~\ref{Lem:ExtensionAsIntersection} 
\[\bigcap_k \LT (I+\fm^{k+1}) = \LT (IR_0 \cap R) = \LT(I)\,.\]
\end{proof}

\subsection{Hilbert function and regularity index}
\begin{definition}
For an ideal $I\subset R_0$ define the {\em Hilbert function} as
\begin{align*}
H_I(k) &= \dim_\bC (\gr(R_0/I)_k) = \dim_\bC\left(\frac{I+\fm^{k}}{I+\fm^{k+1}}\right) \\
&= \dim_\bC\left(R_0/(I+\fm^{k+1})\right) - \dim_\bC\left(R_0/(I+\fm^{k})\right).
\end{align*}
\end{definition}
This is the same as $HS_{R_0/I, \fm}$, the Hilbert-Samuel function of the $R_0$-module $R_0/I$ where $R_0$ is filtered by $\{\fm^k\}$.

The Hilbert function is determined by the initial ideal with respect to the primal monomial order (that respects the degree).
\begin{proposition} For an ideal $I\subset R_0$
  $$H_I(k) = H_{I,0}(k)= H_{\LT(I\cap R)}(k),\ \text{for all }k\in\bN.$$
\end{proposition}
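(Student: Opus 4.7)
The plan is to reduce the stated equality to a monomial-counting identity by combining Corollary~\ref{Coro:dim-local-dual} with Theorem~\ref{thm:complementary-staircases}, expressing both $H_I(k)$ and $H_{\LT(I\cap R)}(k)$ as first differences of dimensions that enumerate standard monomials through degree $k$.

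First I would rewrite $H_I(k)$ in terms of truncated dual spaces. By Corollary~\ref{Coro:dim-local-dual},
$$\dim_\bC(R_0/(I+\fm^{k+1})) = \dim_\bC D_0^k[I],$$
so from the difference definition
$$H_I(k) = \dim_\bC D_0^k[I] - \dim_\bC D_0^{k-1}[I].$$

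Next, I would apply Theorem~\ref{thm:complementary-staircases}, which supplies the disjoint decomposition $\bN^N = \LTg D_0[I] \sqcup \LT(I\cap R)$. Because the primal order is graded (lower total degree is larger in the local sense), the order filtration on $D_0[I]$ is compatible with the $|\alpha|$-stratification of its initial support. Passing to a reduced dual basis whose elements have distinct initial terms then gives
$$\dim_\bC D_0^k[I] \;=\; \#\{\alpha \in \bN^N : |\alpha|\leq k,\ x^\alpha \notin \LT(I\cap R)\}.$$
Since $\LT(I\cap R)$ is a monomial ideal, the right-hand side is simultaneously $\dim_\bC\bigl(R_0/(\LT(I\cap R)+\fm^{k+1})\bigr)$. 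Taking first differences in $k$, both sides reduce to the count of monomials $x^\alpha$ with $|\alpha|=k$ and $x^\alpha \notin \LT(I\cap R)$, which yields $H_I(k) = H_{\LT(I\cap R)}(k)$. The middle term $H_{I,0}(k)$ in the statement denotes the same local Hilbert function at the origin and agrees with $H_I(k)$ immediately from the defining display preceding the proposition.

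The delicate point—really the only step requiring verification—is the truncation compatibility: that the initial support of $D_0^k[I]$ is exactly the degree-$\leq k$ portion of $\LTg D_0[I]$. This rests on the graded choice of primal order, under which $\ord q \leq k$ forces $|\alpha|\leq k$ for $\partial^\alpha = \LTg(q)$, while conversely any $\alpha$ with $|\alpha|\leq k$ and $x^\alpha \notin \LT(I\cap R)$ is realized as the initial term of a reduced-basis element of order exactly $|\alpha|$. Once this is checked, everything else is bookkeeping.
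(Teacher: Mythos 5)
Your proof is correct. The paper actually states this proposition without any proof (it is the classical fact that an ideal and its initial ideal with respect to a degree-compatible order have the same Hilbert function), so there is nothing to compare against line by line; your derivation via Corollary~\ref{Coro:dim-local-dual} and Theorems~\ref{Thm:LVZ} and~\ref{thm:complementary-staircases} is essentially the counting argument already contained in the proof of Corollary~\ref{Coro:dim-local-dual}, carried one step further by taking first differences. The ``delicate point'' you flag is handled correctly: for a graded primal order the initial term of $q$ determines $\ord q$, and the standard monomials of $I+\fm^{k+1}$ of degree $\leq k$ coincide with those of $I\cap R$ of degree $\leq k$, so $\LTg(D_0^k[I])$ is exactly the degree-$\leq k$ part of the complement of $\LT(I\cap R)$ in $\bN^N$.
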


Alternatively, truncated dual spaces determine the Hilbert function.  By Corollary~\ref{Coro:dim-local-dual} it can be seen that
$$H_I(k) = \dim_\bC D_0^k[I] - \dim_\bC D_0^{k-1}[I],\ \text{for }k\geq 0, $$
where $\dim_\bC D_0^{-1}[I]$ is taken to be 0.

For some $m\geq 0$ the Hilbert function is a polynomial in $k$ for all $k \geq m$ (see, e.g., \cite[Lemma 5.5.1]{Singular-book-02}), the {\em Hilbert polynomial} $\HP_I(k)$.  If the dimension of $I \subset R_0$ is $d$, then $\HP_I(k)$ is a polynomial of degree $d-1$.  In particular if $I$ is 0-dimensional then $\HP_I(k) = 0$ since $R_0/I$ is finite dimensional.

\begin{definition} The {\em regularity index} of the Hilbert function is
$$\rho_0(I) = \min\{\,m \,: \,H_I(k)=\HP_I(k)\text{ for all } k \geq m\,\}.$$
\end{definition}

The regularity index of an ideal is used as a stopping criterion for many algorithms which work iteratively by degree.  In particular we will make use of it in Algorithm~\ref{alg:saturation-test-for-a-curve}.

\section{Quotient ideals and eliminating dual spaces}\label{Sec:eliminating}
Section~\ref{Subsec:quotient-dual} described the relationship between the dual space of an ideal $D_0[I]$, and the dual space of the quotient ideal by a principal ideal $D_0[I:\ideal{g}]$.  For applications (such as in Section~\ref{Sec:embedded-test}) it is useful to compute information even about the simplest case, where $g = x_1$.  We would like to find bases for the truncated dual spaces $D_0^d[I:\ideal{x_1}]$ but this proves difficult.

Let $>$ be a graded primal order on the monomials of the local ring $R_0$, and $\succ$ be the dual order for the dual monomials of $D_0$. For any $p \in D_0$, we must have $\ord \LTg(x_1\cdot p) \leq \ord \LTg(p) - 1$, since differentiation reduces the degree of each monomial by 1, but may also annihilate the lead term.  Therefore taking the derivative of the dual space truncated at degree $d+1$ we have $x_1\cdot D_0^{d+1}[I] \subset D_0^{d}[I:\ideal{x_1}]$.  Equality may not hold since there may be some functionals $q \in D_0^{d}[I:\ideal{x_1}]$ with $q = x_1\cdot p$ for some $p \in D_0[I]$ with lead term having degree higher than $d+1$ and is annihilated by $x_1$.  In general, finding $D_0^{d}[I:\ideal{x_1}]$ from the truncated dual space of $I$ may require calculating $D_0^{c}[I]$ up to a high degree~$c$.

\medskip
To overcome the difficulty of computing truncated dual spaces of colon ideals, we consider other filtrations of $D_0$ corresponding to gradings on $R_0$ other than the total degree grading.  For $A \subset \{x_1,\ldots,x_n\}$ define $\ord_A \p^\alpha = \sum_{x_i\in A}\alpha_i$, the total order of all $\p_i$ with $x_i \in A$.  For general $q \in D_0$ define $\ord_A q$ to be the maximum order of the terms of $q$.

\begin{definition}
 Fixing $A \subset \{x_1,\ldots,x_N\}$, the {\em eliminating truncated dual spaces} of $I$ are
$$E^d_0[I,A] = \{ q\in D_0[I] : \ord_A q \leq d\}$$
for all $d\in \bN$.
\end{definition}
We often drop the word {\em truncated} when talking about eliminating dual spaces.

The truncated dual spaces $D_0^d[I]$ give a filtration of $D_0[I]$ corresponding to the maximal ideal $\fm$ of $R_0$
\[ D_0^d[I] = D_0[I + \fm^{d+1}]. \]
Similarly, the eliminating truncated dual spaces for $A$ correspond to the ideal $\ideal{A}$ in that
\[ E^d_0[I,A] = D_0[I + \ideal{A}^{d+1}]. \]

To see this, note that $E^d_0[I,A]$ is the intersection of $D_0[I]$ with $E^d_0[0,A] = D_0[\ideal{A}^{d+1}]$.  By Propositon \ref{Prop:intersection}, the intersection of these two dual spaces is $D_0[I + \ideal{A}^{d+1}]$.

For which ever grading of $R_0$ (and corresponding filtration of $D_0$) is chosen, it is useful to pick a local order $\geq$ (and corresponding dual order $\succeq$) that is compatible with the grading.  An order is compatible if for $x^\alpha \in (R_0)_i$ and $x^\beta \in (R_0)_j$ with $i < j$ then $x^\alpha > x^\beta$.  In the case of the total degree grading, such an order is a graded order.  For the grading given by $\ideal{A}$ a compatible local order is an {\em elimination order}, eliminating the variables in $A$.  In particular this is a block order in which the most significant block is a degree order on the variables in $A$ and the second block is an arbitrary order on the variables not in $A$.  Such an order ensures that $p \in E_0^d[0,A]$ if and only if $\LTg p \in E_0^d[0,A]$.

\begin{remark}
 Dual spaces offer analogs to many operations in elimination theory.  The dual space of $I \cap \bC[x_{m+1},\ldots,x_N]$ is equal to $D_0[I]|_{\p_1 = 0,\ldots,\p_m = 0}$.  The eliminating dual $E^0_0[I,A]$ is the dual space of $I + \ideal{A}$, the variety of which is the intersection of $\bV(I)$ with the coordinate subspace in which the variables in $A$ are zero.  For a more detailed discussion in the case if homogeneous ideals see \cite{Hauenstein:preprint2011}.
 
 Let $\geq$ be an local elimination order for $x_1,\ldots,x_m$ with dual order $\succeq$ and consider ring extension $R' := \bC(x_{m+1},\ldots,x_N)[x_1,\ldots,x_m] \supset R$.  In this extension with $\succeq'$ the corresponding dual order, the monomials in $\operatorname{in}_{\succeq'} D_0[IR']$ are the monomials of $\LTg D_0[I]$ considering only the $x_1,\ldots,x_m$ parts.  These can be computed from $\LTg E_0^d[I,\{x_1,\ldots,x_m\}]$ for sufficiently large $d$.
\end{remark}

Note that the eliminating dual space generalizes the usual truncated dual space since $E^d_0[I,\{x_1,\ldots,x_N\}] = D^d_0[I]$.  For general $A$, we have $E^d_0[I,A]\supset D^d_0[I]$.  Unlike $D_0^d[I]$, the eliminating truncated dual space can be infinite-dimensional.
\begin{proposition}
 If $I$ is an $m$-dimensional ideal that is in general position with respect to $x_1,\ldots,x_m$ then $\dim_\bC E^d_0[I,\{x_1,\ldots,x_m\}] < \infty$.
\end{proposition}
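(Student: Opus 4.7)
The plan is to combine the identity noted just before the statement, $E^d_0[I,A] = D_0[I + \ideal{A}^{d+1}]$ with $A = \{x_1,\ldots,x_m\}$, together with Theorem~\ref{Thm:dimDual}. Theorem~\ref{Thm:dimDual} says that $\dim_\bC D_0[J]$ is finite precisely when the origin is an isolated point of $\bV(J)$, so the task reduces to showing that $0$ is isolated in $\bV(I + \ideal{A}^{d+1})$.

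First I would observe that, set-theoretically,
\[
\bV(I + \ideal{A}^{d+1}) \;=\; \bV(I) \cap \bV(\ideal{A}^{d+1}) \;=\; \bV(I) \cap \{x_1=\cdots=x_m=0\},
\]
since raising $\ideal{A}$ to a power leaves the underlying variety unchanged. The content of the hypothesis ``$I$ is in general position with respect to $x_1,\ldots,x_m$'' is that $x_1,\ldots,x_m$ form a system of parameters on the $m$-dimensional quotient $R_0/IR_0$; equivalently, the projection $\bV(I)\to\bC^m$ onto the first $m$ coordinates has finite fibers locally at $0$. Under this assumption, slicing by $x_1=\cdots=x_m=0$ cuts the dimension all the way down to zero, so $\bV(I+\ideal{A})$ is a finite set in a neighborhood of the origin, and in particular the origin is isolated there.

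Consequently the origin is also isolated in $\bV(I + \ideal{A}^{d+1})$, and Theorem~\ref{Thm:dimDual} applied to this ideal at $0$ gives
\[
\dim_\bC E^d_0[I,A] \;=\; \dim_\bC D_0[I + \ideal{A}^{d+1}] \;<\; \infty,
\]
as required.

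The only delicate point is pinning down the precise meaning of ``general position'': one must interpret it as saying that $x_1,\ldots,x_m$ form a (local) system of parameters for $R_0/IR_0$, so that adjoining them kills all of the remaining $m$ dimensions of $\bV(I)$ near $0$. Once this interpretation is in place, the argument is a direct application of the dual-space/intersection identity $E^d_0[I,A]=D_0[I+\ideal{A}^{d+1}]$ and Theorem~\ref{Thm:dimDual}.
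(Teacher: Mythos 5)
Your proof is correct and follows essentially the same route as the paper: both reduce to observing that general position makes $\bV(I)\cap\bV(x_1,\ldots,x_m)$ zero-dimensional near the origin, and then apply Theorem~\ref{Thm:dimDual} to the ideal $I+\ideal{x_1,\ldots,x_m}^{d+1}$ via the identity $E^d_0[I,A]=D_0[I+\ideal{A}^{d+1}]$. The paper's version is just a terser statement of the same argument.
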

\begin{proof}
 For $I$ satisfying these hypotheses the intersection of $\Var(I)$ with the space $\Var(x_1,\ldots,x_m)$ is 0-dimensional.  By Theorem \ref{Thm:dimDual}, $I + \ideal{x_1,\ldots,x_m}^{d+1}$ has dual space of finite dimension.
\end{proof}
In particular, if $I$ is a curve, after a generic change of coordinates one can finitely compute its eliminating dual spaces for $A = \{x_1\}$.  The following proposition provides a method to compute eliminating dual spaces of the quotient ideal $I:\ideal{x_1}$ as well.

\begin{proposition}~\label{prop:E_0 of colon ideal} 
$E_0^{d}[I:\ideal{x_1}, \{x_1\}] = x_1 \cdot E_0^{d+1}[I,\{x_1\}]$
for all $d\in\bN$.
\end{proposition}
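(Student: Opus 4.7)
The plan is to prove both inclusions using the $x_1$-homogeneous decomposition of elements of $D_0$, together with Theorem~\ref{thm:dualOfColonIdeal}. Every $q\in D_0$ decomposes uniquely as $q=\sum_{j\geq 0} q^{(j)}$, where $q^{(j)} = \sum_{\alpha:\alpha_1=j} c_\alpha \partial^\alpha$ is the $x_1$-homogeneous component of order $j$. The map $\der_{x_1}$ sends $\partial^\alpha$ with $\alpha_1\geq 1$ to $\partial^{\alpha-e_1}$ and kills $\partial^\alpha$ with $\alpha_1=0$. Thus $\der_{x_1}$ carries the $x_1$-order-$j$ component of $q$ bijectively onto the $x_1$-order-$(j-1)$ component of its image (for $j\geq 1$), so in particular $\ord_{\{x_1\}}(x_1\cdot p)=\ord_{\{x_1\}}(p)-1$ whenever $p^{(0)}$ is not the only nonzero component, and $\leq \ord_{\{x_1\}}(p)-1$ in general (with the convention $\ord_{\{x_1\}} 0 = -\infty$).

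For the inclusion $x_1\cdot E_0^{d+1}[I,\{x_1\}] \subseteq E_0^d[I:\ideal{x_1},\{x_1\}]$, take $p\in D_0[I]$ with $\ord_{\{x_1\}}p\leq d+1$. Theorem~\ref{thm:dualOfColonIdeal} gives $x_1\cdot p\in D_0[I:\ideal{x_1}]$, and the observation above gives $\ord_{\{x_1\}}(x_1\cdot p)\leq d$. This direction is essentially routine.

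For the reverse inclusion, take $q\in E_0^d[I:\ideal{x_1},\{x_1\}]$. By Theorem~\ref{thm:dualOfColonIdeal}, $q=x_1\cdot p$ for some $p\in D_0[I]$. The key step is to show $\ord_{\{x_1\}}p\leq d+1$ automatically, not merely after some modification of $p$. Writing $p=\sum_{j\geq 0}p^{(j)}$ and matching $x_1$-homogeneous components yields $q^{(i)} = \der_{x_1}(p^{(i+1)})$ for all $i\geq 0$. Since $q^{(i)}=0$ for $i>d$ and $\der_{x_1}$ is injective on the $x_1$-order-$j$ subspace for $j\geq 1$, we conclude $p^{(j)}=0$ for all $j>d+1$, so $p\in E_0^{d+1}[I,\{x_1\}]$ and $q=x_1\cdot p\in x_1\cdot E_0^{d+1}[I,\{x_1\}]$.

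The only place requiring care is the injectivity argument in the reverse inclusion: it is tempting to modify $p$ by adding an element of $\ker\der_{x_1}=D_0[\ideal{x_1}]$ to kill high-order terms, but this is in fact unnecessary, and the cleaner observation is that the $x_1$-homogeneous grading is preserved by $\der_{x_1}$ and the map is injective on each positive-order graded piece, so the bound on $\ord_{\{x_1\}} q$ immediately forces the corresponding bound on $\ord_{\{x_1\}} p$.
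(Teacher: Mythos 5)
Your proof is correct and is essentially the argument the paper gives: both directions rest on Theorem~\ref{thm:dualOfColonIdeal} together with the fact that $\der_{x_1}$ lowers $\ord_{\{x_1\}}$ by exactly one on any functional it does not annihilate. The paper records this fact via lead terms under a dual order eliminating $x_1$ (so that the lead term of $p$ carries the maximal $\p_1$-degree), while you record it via the $x_1$-homogeneous decomposition and the injectivity of $\der_{x_1}$ on each positive graded piece; these are interchangeable bookkeeping devices, and your point that the preimage $p$ supplied by Theorem~\ref{thm:dualOfColonIdeal} needs no further modification matches the paper's reasoning.
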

\begin{proof}
Let $\succeq$ be a dual order on $D_0$ eliminating $x_1$. 
	For any functional $p \in D_0$, either $\LTg(p)$ is divisible by $\p_1$, or $p$ has no terms divisible by $\p_1$.  In the first case, $\LTg(x_1\cdot p) = \LTg(p)/\p_1$.  In the second case $x_1\cdot p = 0$.  Therefore, in the view of Theorem~\ref{thm:dualOfColonIdeal}, any non-zero $q \in E_0^{d}[I:\ideal{x_1},\{x_1\}]$ must be the derivative of some $p \in E_0^{d+1}[I,\{x_1\}]$.
\end{proof}

This proposition is used in Algorithm~\ref{alg:saturation-test-for-a-curve}; see Example~\ref{example:cusp}.

Proposition~\ref{prop:E_0 of colon ideal} for curves does not hold in general (only a weaker Proposition~\ref{prop:E_0 of colon ideal_dim_m} does) and we are unable to use the eliminating dual spaces outside the specialized Algorithm~\ref{alg:saturation-test-for-a-curve}. 

\begin{proposition}~\label{prop:E_0 of colon ideal_dim_m}
\begin{align}
  \label{eq: E^d supset... E^{d+1}} E_0^{d}[I:\ideal{x_1,\ldots,x_m}, \{x_1,\ldots,x_m\}] & \supset
  \sum_{i=1}^m x_i\cdot E_0^{d+1}[I,\{x_1,\ldots,x_m\}]
\end{align}
for all $d\in\bN$.
\end{proposition}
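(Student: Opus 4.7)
The plan is to verify the two conditions that define membership in $E_0^d[I:\ideal{x_1,\ldots,x_m},\{x_1,\ldots,x_m\}]$ separately: (a) each summand lies in $D_0[I:\ideal{x_1,\ldots,x_m}]$, and (b) each summand has $\ord_A$-order at most $d$, where $A=\{x_1,\ldots,x_m\}$. Since both conditions are preserved by $\bC$-linear combinations, it suffices to check them on a generic summand $x_i \cdot p$ with $p \in E_0^{d+1}[I,A]$.

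For (a), I would use the containment $\ideal{x_i} \subset \ideal{x_1,\ldots,x_m}$, which gives the reverse containment of colon ideals $I:\ideal{x_1,\ldots,x_m} \subset I:\ideal{x_i}$, and hence, by Corollary~\ref{Coro:DualIsInjective}, the containment of dual spaces $D_0[I:\ideal{x_i}] \subset D_0[I:\ideal{x_1,\ldots,x_m}]$. By Theorem~\ref{thm:dualOfColonIdeal}, $D_0[I:\ideal{x_i}] = x_i \cdot D_0[I]$, so $x_i \cdot p \in x_i\cdot D_0[I] \subset D_0[I:\ideal{x_1,\ldots,x_m}]$.

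For (b), I would unpack the definition of $\ord_A$: for any monomial functional $\p^\alpha$, either $\alpha_i = 0$ (in which case $x_i \cdot \p^\alpha = 0$) or $\alpha_i \geq 1$ (in which case $x_i \cdot \p^\alpha = \p^{\alpha - e_i}$, reducing $\ord_A$ by exactly $1$ since $x_i \in A$). Extending to general $p = \sum c_\alpha \p^\alpha$ by linearity, we obtain $\ord_A(x_i \cdot p) \leq \ord_A(p) - 1 \leq d$. Combining with (a) places $x_i \cdot p$ in $E_0^d[I:\ideal{x_1,\ldots,x_m}, A]$, and summing over $i$ and taking linear combinations yields the claimed inclusion.

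I expect no real obstacle here: the proposition is a one-sided inclusion, so the only subtlety is in correctly tracking how $\ord_A$ behaves under the $R_0$-action, and in getting the colon-ideal containment facing the right direction (note that the natural instinct, $\ideal{x_i} \supset \ideal{A}$, is wrong and must be replaced by the opposite). The reverse inclusion, which would presumably require a lifting argument analogous to the proof of Theorem~\ref{thm:dualOfColonIdeal}, is not needed; indeed, the discussion following the statement indicates that equality fails in general, which is why only $\supset$ is asserted.
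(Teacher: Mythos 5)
Your proof is correct and rests on the same key ingredient as the paper's, namely Theorem~\ref{thm:dualOfColonIdeal} applied to each single variable $x_i$; it differs from the paper in one step. The paper writes $I:\ideal{x_1,\ldots,x_m} = \bigcap_{i=1}^m I:\ideal{x_i}$ and invokes Proposition~\ref{Prop:intersection} to obtain the \emph{equality} of untruncated dual spaces $D_0[I:\ideal{x_1,\ldots,x_m}] = \sum_i x_i\cdot D_0[I]$, from which the truncated inclusion follows; you instead use only the monotonicity $I:\ideal{x_1,\ldots,x_m} \subset I:\ideal{x_i}$ together with Corollary~\ref{Coro:DualIsInjective}, which yields exactly the one-sided containment needed and nothing more. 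Your route is marginally more elementary (it avoids the intersection identity and the dual-of-intersections fact), while the paper's establishes the stronger statement that equality holds before truncating, which makes it clear that the failure of equality in the proposition is attributable purely to the truncation. You also spell out the $\ord_A$ bookkeeping --- that $x_i\cdot\p^\alpha$ either vanishes or drops $\ord_A$ by exactly $1$ because $x_i\in A$ --- a step that is genuinely needed to land in $E_0^{d}$ rather than merely in $D_0[I:\ideal{x_1,\ldots,x_m}]$, and which the paper's proof leaves implicit.
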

\begin{proof} The inclusion (\ref{eq: E^d supset... E^{d+1}}) holds, since $I:\ideal{x_1,\ldots,x_m} = \bigcap_{i=1}^m I:\ideal{x_i}$ and, by Theorem~\ref{thm:dualOfColonIdeal},
\begin{align*}
  D_0[I:\ideal{x_1,\ldots,x_m}] =
  \sum_{i=1}^m D_0[I:\ideal{x_i}] =
  \sum_{i=1}^m x_i \cdot D_0[I].
\end{align*}

\end{proof}


\begin{remark} \label{rem:E_0}
Assuming it is finite, a basis for $E^d_0[I,\{x_1,\ldots,x_m\}]$ can be computed by finding a basis of the dual space of $I + \ideal{x_1,\ldots,x_m}^{d+1}$.  The dual space of a 0-dimensional ideal can be efficiently computed for example with the algorithm of \cite{Mourrain:inverse-systems} or others.
\end{remark}


\section{Detecting embedded points on curves}\label{Sec:embedded-test}

The general problem of detecting an embedded component can be formulated as follows:
\begin{quote}
Consider an ideal $I\subset R$ and a prime ideal $P_0\supset I$. Let $P_1,\ldots,P_r\supset I$ be associated ideals of $I$ such that $\sqrt{P_i}\subsetneq P_0$.

Given generators of $I$ together with general points $y_0\in \bV(P)$ and $y_i\in \bV(P_i)$ ($i=1,\cdots,r$) determine whether $P$ is an associated prime of $R/I$.
\end{quote}

The problem that we solve here is more special: we consider the case when the variety is locally a curve, namely, $\dim _{y_0} I = 1$. That means $\dim P_i=1$ for $i\neq 0$ and $\bV(P_0) = \{y_0\}$ is a point that may or may not be an embedded component. 

\medskip
Let an ideal $I$ be given by its generators $F$ and suppose, without a loss of generality, that the point in question is $y_0=0$. Let the 1-dimensional primary components in the problem be $P_1,\ldots,P_r$ with $V_i = \Var(P_i)$ containing the origin.  Saturating $I$ by the ideal $\ideal{x_1}$ eliminates all the components of $I$ that contain $\ideal{x_1}$.  After a generic linear change of coordinates, we may assume that no $V_i$ is contained in the hyperplane $x_1 = 0$ except for $V_0=\Var(Q_0)$, so $I:\ideal{x_1}^\infty \neq I$ if and only if the origin is an embedded component.

This leads to the following algorithm that employs the eliminating dual spaces.
\begin{algorithm}\label{alg:saturation-test-for-a-curve} $B = \operatorname{IsOriginEmbeddedInCurve}(I)$
\begin{algorithmic}
\REQUIRE $I$, a 1-dimensional ideal of $R$ in regular position relative to $x_1$.
\ENSURE $B = \text{``origin is an embedded component of $I$''}$, a Boolean value.
\smallskip \hrule \smallskip

\STATE $r \gets \rho_0(I)$;
\STATE $m \gets \mu_0(I)$;
\STATE $k \gets \max(r,m-1)$;
\STATE $E \gets E_0^{k}[I,\{x_1\}]$;
\RETURN $x_1\cdot E \subsetneq E_0^{k-1}[I,\{x_1\}]$

\smallskip \hrule \smallskip
\end{algorithmic}
\end{algorithm}

Here $\mu_0(I)$ denotes the {\em multiplicity} (or {\em degree}) of $I$ at the origin.  For $I$ a curve, note the (local) Hilbert polynomial of $I$ is the constant polynomial $\HP_I(k) = \mu_0(I)$.  To compute $\rho_0(I)$ and $\mu_0(I)$ we refer to the algorithm given in \cite{Krone:dual-bases-for-pos-dim} which produces the Hilbert function of $I$ from a set of generators, and in the process the Hilbert regularity index and the Hilbert polynomial of $I$.

The following two lemmas are used in the proof of correctness of Algorithm~\ref{alg:saturation-test-for-a-curve}.

\begin{lemma}\label{prop:truncated-test}
 Suppose ideals $I,J \subset R_0$ satisfy $I \subseteq J$ and $\dim_\bC J/I$ is finite.  Then $I = J$ if and only if
 \[ D^{r-1}_0[I] = D^{r-1}_0[J] \]
 where $r = \max\{\rho_0(I),\rho_0(J)\}$.
\end{lemma}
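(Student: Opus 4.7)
The forward direction is immediate. For the converse, the plan is to promote the hypothesized equality $D_0^{r-1}[I] = D_0^{r-1}[J]$ to an equality $D_0^k[I] = D_0^k[J]$ for every $k$, and then invoke Corollary~\ref{Coro:DualIsInjective} on the unions $D_0[I] = D_0[J]$ to conclude $I = J$.

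Since $I \subseteq J$, Corollary~\ref{Coro:DualIsInjective} already gives $D_0^k[J] \subseteq D_0^k[I]$, so only a dimension count is needed. For $k \leq r-1$ the hypothesis directly suffices. For $k \geq r$, I would use Corollary~\ref{Coro:dim-local-dual} to write
\[ \dim_\bC D_0^k[I] = \dim_\bC D_0^{r-1}[I] + \sum_{j=r}^k H_I(j), \]
together with the analogous formula for $J$. Because $r \geq \rho_0(I)$ and $r \geq \rho_0(J)$, every $H_I(j)$ and $H_J(j)$ appearing here equals $\HP_I(j)$, respectively $\HP_J(j)$, so the task reduces to showing $\HP_I = \HP_J$.

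This is the crux, and it is where the finite-dimensionality of $J/I$ enters. Since $J/I$ is annihilated by a sufficiently high power of $\fm$, the Artin--Rees lemma yields $J \cap \fm^{k+1} \subseteq I$ for all $k \gg 0$. The leftmost term of the short exact sequence
\[ 0 \to (J+\fm^{k+1})/(I+\fm^{k+1}) \to R_0/(I+\fm^{k+1}) \to R_0/(J+\fm^{k+1}) \to 0 \]
is then isomorphic to $J/I$ for $k \gg 0$, so by Corollary~\ref{Coro:dim-local-dual} the difference $\dim_\bC D_0^k[I] - \dim_\bC D_0^k[J]$ stabilizes at the constant $\dim_\bC J/I$. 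Taking successive differences yields $H_I(k) = H_J(k)$ for $k \gg 0$, hence $\HP_I = \HP_J$ as polynomials.

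With $\HP_I = \HP_J$ in hand, the dimension equality from level $r-1$ propagates to every higher level, giving $D_0^k[I] = D_0^k[J]$ for all $k$, and then $D_0[I] = D_0[J]$ by taking unions. The main (and essentially only) obstacle is the invocation of Artin--Rees to pass from $\dim_\bC J/I < \infty$ to the coincidence of the Hilbert polynomials; everything else is bookkeeping with the filtrations.
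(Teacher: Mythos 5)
Your proof is correct and follows essentially the same route as the paper's: show that the Hilbert functions of $I$ and $J$ agree in every degree, using the hypothesis $D_0^{r-1}[I]=D_0^{r-1}[J]$ for degrees below $r$ and the equality of Hilbert polynomials (forced by $\dim_\bC J/I<\infty$) for degrees at least $r$. The only difference is one of detail: the paper simply asserts that finiteness of $J/I$ gives $\HP_I=\HP_J$, whereas you justify it via Artin--Rees and the exact sequence of quotients, which is a valid (and welcome) elaboration of the same argument.
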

\begin{proof}
 Since $I \subseteq J$, to show $I = J$ it is enough to show that $H_I(k) = H_J(k)$ for all $k \geq 0$.  Because $\dim_\bC J/I$ is finite, $\HP_I = \HP_J$ so the Hilbert functions agree for $k \geq r$.  If additionally $D^{r-1}_0[I] = D^{r-1}_0[J]$, then the Hilbert functions also agree for $0 \leq k < r$. 
\end{proof}

\begin{lemma}\label{prop:J-reg}
 If $J$ is a one-dimensional monomial ideal that is saturated at the origin ($J = J:\fm^\infty$), then
  \[ \rho_0(J) \leq \mu_0(J)-1. \]
\end{lemma}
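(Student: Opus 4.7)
The plan is to pass to the associated graded algebra of $R_0/J$ with respect to the $\fm$-adic filtration. Because $J$ is a monomial ideal, it is already homogeneous for the standard grading on $R$, so one can identify $\gr_\fm(R_0/J)$ with the standard graded algebra $A := R/J$; in particular $H_J(k) = \dim_\bC A_k$. I will then use the hypothesis that $J$ is saturated at $\fm$ to show that no associated prime of $A$ equals the irrelevant ideal, which, combined with $\dim A = 1$, makes $A$ a Cohen--Macaulay graded algebra.

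Next I would produce a linear non-zero-divisor $\ell \in A_1$ via graded prime avoidance over the infinite field $\bC$, and use it to build the short exact sequence $0 \to A(-1) \xrightarrow{\cdot \ell} A \to A/\ell A \to 0$, which yields
\[ H_J(k) - H_J(k-1) \;=\; H_{A/\ell A}(k). \]

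The heart of the argument is to analyze $A/\ell A$: it is a standard graded $\bC$-algebra of Krull dimension $0$, hence generated in degree one, so the vanishing of $(A/\ell A)_k$ in some degree will force it to vanish in all higher degrees. Since $(A/\ell A)_0 = \bC$, this forces its Hilbert function to be strictly positive on an initial segment $\{0, 1, \ldots, s\}$ and zero beyond.

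Telescoping the displayed relation then shows that $H_J$ strictly increases from $H_J(0) = 1$ to $H_J(s) = \mu_0(J)$ and is constant thereafter, so $\rho_0(J)$ equals $s$. Because each of the $s$ positive increments is at least $1$, one concludes $\mu_0(J) \geq s+1 = \rho_0(J)+1$, as claimed. The only real obstacle is the existence of the linear non-zero-divisor, which becomes automatic once the Cohen--Macaulayness of $A$ is established.
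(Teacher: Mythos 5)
Your argument is correct, but it takes a genuinely different route from the paper's. The paper's proof is combinatorial: it takes a cone (Stanley-type) decomposition of the standard monomials of $J$ into one-dimensional cones $C_{m_j,\{x_{i_j}\}}$, shifts each base monomial by dividing out $x_{i_j}$, and uses closedness of the resulting monomial sets under divisibility to show that in every degree $d \geq \mu_0(J)-1$ the cones contribute exactly $\mu_0(J)$ distinct standard monomials. Your proof instead runs through standard commutative algebra: the hypothesis $J = J:\fm^\infty$ is exactly $\fm \notin \Ass(R/J)$, which for a one-dimensional graded ring gives $\depth(R/J) = 1 = \dim(R/J)$; graded prime avoidance over the infinite field $\bC$ then yields a linear non-zero-divisor $\ell$ (every associated prime of the monomial ideal $J$ other than $\fm$ meets the span of the variables in a proper subspace), and the Artinian reduction $A/\ell A$ has Hilbert function positive exactly on an initial segment $\{0,\dots,s\}$, whence $\rho_0(J) \leq s$ and $\mu_0(J) = \sum_{j=0}^{s}\dim_\bC (A/\ell A)_j \geq s+1$. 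Each step checks out. What your approach buys is generality: it proves the bound for any homogeneous one-dimensional ideal saturated at the irrelevant ideal, not just monomial ones, and identifies the statement as the classical regularity bound for one-dimensional Cohen--Macaulay standard graded algebras. What the paper's approach buys is self-containedness: it stays within the combinatorics of monomial staircases already used throughout the paper and needs no depth theory or prime avoidance. One small phrasing quibble: $A/\ell A$ is generated in degree one because it is standard graded (a quotient of $R$), not because it has Krull dimension zero; dimension zero gives finiteness, while standard gradedness is what gives $(A/\ell A)_{k+1} = (A/\ell A)_1\cdot(A/\ell A)_k$ and hence the ``once zero, always zero'' behavior you use.
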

\begin{proof}
 We consider a {\em monomial cone decomposition} of the standard monomials of $J$.  For monomial $m \in R_0$ and a set of variables $v = \{x_{i_1},\ldots,x_{i_k}\}$ the monomial cone $C_{m,v}$ is
  \[ C_{m,v} := \{x_{i_1}^{a_1}\ldots x_{i_k}^{a_k} m \mid (a_1,\ldots,a_k) \in \bN^k\}. \]
 A monomial cone decomposition of $R_0/J$ is a finite list of pairs $$(m_1,v_1),\ldots,(m_s,v_s)$$ such that the standard monomials of $J$ are a disjoint union of the cones $$C_{m_1,v_1},\ldots,C_{m_s,v_s}.$$  The dimension of a cone $C_{m,v}$ is defined to be the size of $v$.  A cone decomposition is closely related to the Hilbert function of $J$: the maximum dimension of a cone in the decomposition is the dimension of the ideal; the number of maximal dimensional cones is the multiplicity $\mu_0(J)$; and the maximum degree of the monomials $m_1,\ldots,m_s$ bounds the regularity $\rho_0(J)$.  For $J$ a one-dimensional monomial ideal saturated at the origin, there is a cone decomposition $(m_1,\{x_{i_1}\}),\ldots,(m_s,\{x_{i_s}\})$ of $R_0/J$ consisting only of dimension 1 cones.
 
 Modify this decomposition slightly by letting $m'_j := m_j|_{x_{i_j} = 1}$, the monomial obtained from $m_j$ by removing $x_{i_j}$.  The cones $C_{m'_1,v_1},\ldots,C_{m'_s,v_s}$ also have the standard monomials of $J$ as their union, but are generally not disjoint.  To prove the proposition, it is sufficient show that for all $d \geq \mu_0(J) - 1$ each cone contains exactly one monomial of degree $d$ and these monomials are distinct, and therefore $H_J(d) = \mu_0(J)$.
 
 Let $M_k := \{m'_j \mid i_j = k \}$.  Note that $\sum_k |M_k| = \mu_0(J)$.  For each $k$, $M_k$ is closed under differentiation.  This follows from the fact that $M_k$ is the set of standard monomials of $\pi(J)$ where $\pi:\bC[x_1,\ldots,x_N] \to \bC[x_1,\ldots,\hat{x_k},\ldots,x_N]$ is the projection sending $x_k$ to 1.  If $M_k$ has a monomial of degree $d$, it also has at least one monomial of each degree $< d$.  Therefore
  \[ \max_{m \in M_k}\deg m \leq |M_k| - 1 \leq \mu_0(J) - 1. \]
 So for $d \geq \mu_0(J) - 1$, each cone contains a monomial of degree $d$.
 
 Suppose two cones in the decomposition intersect, so $n = m'_jx_{i_j}^a = m'_lx_{i_l}^b$ for some $j \neq l$ and $x_{i_j} \neq x_{i_l}$.  Then $x_{i_j}^a$ divides $m'_l$ so $a \leq \deg m'_l$.
  \[ \deg n = \deg m'_j + a \leq \deg m'_j + \deg m'_l \leq |M_{i_l}| + |M_{i_j}| - 2 \leq \mu_0 - 2. \]
 No two cones have a monomial in common of degree $d \geq \mu_0(J) - 1$.
\end{proof}

\begin{proof}[Proof of correctness of Algorithm~\ref{alg:saturation-test-for-a-curve}]
By Proposition~\ref{prop:E_0 of colon ideal} $x_1\cdot E = E_0^{k-1}[I:\ideal{x_1},\{x_1\}]$.  If this dual space is not equal to $E_0^{r-1}[I,\{x_1\}]$ then $I:\ideal{x_1} \neq I$.  This implies there is an embedded component at the origin.

Suppose instead $x_1\cdot E = E_0^{k-1}[I,\{x_1\}]$.  We will use Lemma~\ref{prop:truncated-test} to prove that $I:\ideal{x_1} = I$.  The truncated dual space of degree $r-1$ is contained in the eliminating dual space of degree $r$, so $D_0^{r-1}[I:\ideal{x_1}] = D_0^{r-1}[I]$.  We know that $I \subseteq I:\ideal{x_1}$.  Because they differ by at most a zero-dimensional component, $\dim_{\bC}(I:\ideal{x_1})/I$ is finite.

Finally it must be shown that $k \geq \max(\rho_0(I),\rho_0(I:\ideal{x_1}))$.  It is clear that $k \geq \rho_0(I)$.  To show $k \geq \rho_0(I:\ideal{x_1})$, let $J = \initial(I):\fm^{\infty}$, which has the same Hilbert polynomial as $I$ and $I:\ideal{x_1}$ and satisfies
 \[ \initial(I) \subseteq \initial(I:\ideal{x_1}) \subseteq J, \]
 \[ H_I \geq H_{I:\ideal{x_1}} \geq H_J. \]
By Lemma~\ref{prop:J-reg}, $\rho_0(J) \leq \mu_0(I) - 1$.  Since $H_{I:\ideal{x_1}}$ is sandwiched between $H_I$ and $H_J$, once they stabilize to $\mu_0(I)$, so must $H_{I:\ideal{x_1}}$.  This implies the regularity of $I:\ideal{x_1}$ is bounded by $k$.
\end{proof}

\begin{example}\label{example:cusp}
 Let $I = \ideal{x^2 - z^3,y-z^2} \subset \bC[x,y,z]$ which defines a curve in $\bC^3$ with a singular point at the origin.  The deflation algorithm from \cite{Leykin:NPD} will identify the origin as a possible embedded component.  Note that $\rho_0(I) = 1$, $\mu_0(I) = 2$ and no irreducible component of $\Var(I)$ is contained in the plane $x = 0$.  To test whether the origin is embedded, we compute the eliminating dual $E_0^1[I,\{x\}]$.  This is the set of all dual functionals with all terms having $\p_x$-degree $\leq 1$.
  \[ E_0^1[I,\{x\}] = \Span\{ 1, \p_z^2 + \p_y, \p_z, \p_x, \p_x\p_z^2 + \p_x\p_y, \p_x\p_z \}, \]
  \[ x\cdot E_0^1[I,\{x\}] = \Span\{ 1, \p_z^2 + \p_y, \p_z \}. \]
  Since $x\cdot E_0^1[I,\{x\}] = E_0^0[I,\{x\}]$ we conclude that the origin is not an embedded component of $I$.
\end{example}

\begin{example}
 For this example we compute with an implementation of Algorithm \ref{alg:saturation-test-for-a-curve} in {\em Macaulay2}.  Let $I$ be the ideal of the cyclic4 system, generated by
 \[ \{x_1 + x_2 + x_3 + x_4,\; x_1x_2 + x_2x_3 + x_3x_4 + x_4x_1, \]
 \[ x_2x_3x_4 + x_1x_3x_4 + x_1x_2x_4 + x_1x_2x_3,\; x_1x_2x_3x_4 - 1\}. \]
 $I$ is a curve with several singular points, which are discovered using the algorithm described in \cite{Leykin:NPD} up to some numerical precision.  One such point is $p =$
 \[ (-0.0000000000000000122 + 1.000000000000000222i, \]
 \[ -0.0000000000000001128 + 0.999999999999999889i, \]
 \[  0.0000000000000000459 - 0.999999999999999889i, \]
 \[ -0.0000000000000000935 - 1.000000000000000000i), \]
 approximately $(i,i,-i,-i)$.  Let $I'$ denote the ideal obtained from $I$ by a random affine change of coordinates that fixes $p$.  This ensures that $I'$ is in general position with respect of $x_1$.  Using the algorithm of \cite{Krone:dual-bases-for-pos-dim}, the regularity index is $\rho_p(I') = 2$ and the multiplicity is $\mu_p(I') = 1$, so $k = \max(2,0) = 2$.
 
 Computing $E_p^1[I',\{x_1\}]$ and $x_1\cdot E_p^2[I',\{x_1\}]$ the dimensions are 3 and 2 respectively, so they are not equal.  Therefore the point being approximated by $p$ is an embedded component of $I$.  The code for this example can be found at \cite{ECTwww}.
\end{example}

\bibliographystyle{plain}
\bibliography{bib}

\end{document}